\def \R{\mathbb{R}}
\def \M{\mathcal{M}}
\def \id{\mathrm{id}}
\def \BDiff{\mathop{\mathrm{BDiff}}}
\def \r{\mathcal{R}}
\def \colim{\mathop{\mathrm{colim}}}
\def \co{\colon\thinspace}
\def \MTSO{\mbox{MTSO}}
\title{The weak b-principle: Mumford conjecture}
\author{Rustam Sadykov}
\address{\newline Department of Mathematics\\ CINVESTAV
\newline Av. Instituto Politecnico Nacional 2508
\newline  Col. San Pedro Zacatenco 
\newline Mexico, D.F. CP 07360}
\email{rstsdk@gmail.com}
\newtheorem{theorem}{Theorem}[section]
\newtheorem{lemma}[theorem]{Lemma}
\newtheorem{proposition}[theorem]{Proposition}
\newtheorem*{b-principle}{The b-principle}
\newtheorem*{H-principle}{H-principle}
\theoremstyle{remark}
\newtheorem{remark}[theorem]{Remark}
\theoremstyle{definition}
\newtheorem{definition}[theorem]{Definition}
\newtheorem{example}[theorem]{Example}
\subjclass[2010]{Primary: 55N20; Secondary: 53C23}
\begin{document}

\begin{abstract} 
In this note we introduce and study a new class of maps called oriented colored broken submersions. This is the simplest class of maps that satisfies a version of the b-principle and in dimension $2$ approximates the class of oriented submersions well in the sense that 
every oriented colored broken submersion of dimension $2$ to a closed simply connected manifold is bordant to a submersion. 

We show that the Madsen-Weiss theorem (the standard Mumford Conjecture) fits  a general setting of the b-principle. Namely, a version of the b-principle for 
oriented colored broken submersions  together with the Harer stability theorem and Miller-Morita theorem implies the Madsen-Weiss theorem. 
\end{abstract}
\maketitle

\date{\today}

\section{Introduction}

A smooth map of manifolds $f\colon M\to N$ is said to be an \emph{immersion} if its differential is a fiberwise monomorphism $TM\to TN$ of tangent bundles. According to a remarkable theorem by Smale and Hirsch the space of immersions $M\to N$ of given manifolds with $\dim M<\dim N$ is weakly homotopy equivalent to a simpler topological space of \emph{formal immersions}, i.e., fiberwise monomorphisms $TM\to TN$. The Smale-Hirsch theorem was one of the primary motivations for the general Gromov \emph{h-principle}: given a differential relation, the space of its solutions is weakly homotopy equivalent to the space of its formal solutions \cite{Gr}.   

In \cite{Sa} (for a short review, see \cite{Sa1}) I proposed a stable homotopy version of the h-principle, the b-principle, motivated by a series of earlier results including \cite{An, Au3, El, GMTW, MW, RS, Sa3, Sa4, Sz, We}.  Namely, with every  open stable differential relation $\r$, there are associated a moduli space $\M_\r$ of solutions, a moduli space $h\M_\r$ of stable formal solutions, and a map $\alpha\co \M_\r\to h\M_\r$.  It turns out that $\M_\r$ is an H-space with a coherent operation, while $h\M_\r$ is an infinite loop space \cite{Sa}, whose stable homotopy type is relatively simple. The b-principle is the following conjecture. 
\begin{b-principle}
The canonical map $\M_\r\to h\M_\r$ is a group completion.
\end{b-principle}
When holds true, the b-principle allows us to perform explicit computations of invariants of  solutions. On the other hand,  the b-principle is true for most of the differential relations (see \cite{Sa} and references above); notable exceptions are the differential relations of oriented submersions of positive dimensions $d$. In this important exceptional case the b-principle inclusion coincides with the Madsen-Tillmann map 
\[
\alpha\co \sqcup \BDiff M\to \Omega^{\infty}\MTSO(d),
\]
where $\sqcup \BDiff M$ is the disjoint union of the classifying spaces of orientation preserving diffeomorphism groups of oriented closed (possibly not path connected) manifolds $M$ of dimension $d$, while $\Omega^{\infty}\MTSO(d)$ is the infinite loop space of the Madsen-Tillmann spectrum~\cite{GMTW}. 
The standard Mumford conjecture asserts that for $d=2$ and for a closed oriented surface $F_g$ of genus $g$, the map $\alpha|\BDiff F_g$ induces an isomorphism of rational cohomology rings in stable range of dimensions $*\ll g$.  The Mumford conjecture was proved in the positive by Madsen and Weiss in \cite{MW}, and later several other proofs of the Madsen-Weiss theorem were given in \cite{GMTW, EGM, GRW, Ha}. 

\begin{theorem}[Madsen-Weiss]\label{th:main1} The rational cohomology ring of $\BDiff F_g$ is a polynomial ring in terms of Miller-Morita-Mumford classes $\kappa_i$:
\[
\ \qquad   \qquad  \qquad \qquad H^*(\BDiff F_g; \mathbb{Q}) \simeq \mathbb{Q}[\kappa_1, \kappa_2, ...],  \qquad \qquad  \qquad \textrm{for } *\ll g,
\]
or, equivalently, the map $\alpha|\mathop\mathrm{BDiff} F_g$ is a rational homology equivalence in a stable range of dimensions. 
\end{theorem}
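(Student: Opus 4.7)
The plan is to deduce the theorem from three ingredients: the b-principle for the relation $\r$ of oriented colored broken submersions of dimension $2$, Harer's stability theorem, and the Miller-Morita theorem. The strategy is to let broken submersions do the homotopy-theoretic work: they are flexible enough to admit a b-principle (unlike ordinary oriented submersions in dimension $2$), yet in dimension $2$ over simply connected bases they differ from submersions only up to bordism.

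First I would establish the b-principle for $\r$, obtaining a group completion $\alpha_{\r}\co \M_\r \to h\M_\r$ with $h\M_\r$ an infinite loop space whose stable homotopy type is explicitly describable via a Thom spectrum of Madsen-Tillmann type. A direct calculation then gives $H^*(h\M_\r; \Q) \cong \Q[\kappa_1, \kappa_2, \ldots]$, polynomial on universal Mumford classes. Next I would invoke the bordism statement from the abstract: every oriented colored broken submersion of dimension $2$ to a closed simply connected base is bordant to a submersion. Applied to the classifying spaces at play, this tells us that the forgetful map $\sqcup_g \BDiff F_g \to \M_\r$ is a rational homology equivalence after group completion, so the rational homology of the group completion of $\sqcup_g \BDiff F_g$ agrees with $H_*(h\M_\r; \Q)$.

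Harer's stability theorem then identifies $H_*(\BDiff F_g; \Q)$ in degrees $* \ll g$ with the homology of the genus-$g$ component of this group completion, yielding
\[
H^*(\BDiff F_g; \Q) \;\cong\; H^*(h\M_\r; \Q) \;\cong\; \Q[\kappa_1, \kappa_2, \ldots]
\]
in the stable range. By the Miller-Morita theorem, the universal polynomial generators of $h\M_\r$ pull back precisely to the classes $\kappa_i$ on $\BDiff F_g$, giving the stated description of the isomorphism.

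The hard part will be the b-principle for oriented colored broken submersions together with the bordism identification with ordinary submersions over simply connected bases; these are the genuine technical inputs that replace the original homotopy-theoretic analysis of the Madsen-Tillmann spectrum in \cite{MW}. Once both are in place, Harer stability and Miller-Morita enter as formal ingredients that assemble the Madsen-Weiss statement from the general b-principle framework.
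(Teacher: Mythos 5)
Your overall strategy (broken submersions, weak b-principle, Harer, Miller--Morita) is the paper's, but as laid out it has genuine gaps. The most concrete one is in your comparison of $\sqcup_g \BDiff F_g$ with the moduli space of colored broken submersions: you invoke only the absolute statement that a broken submersion over a \emph{closed} simply connected base is bordant to a fiber bundle. Even granting that bordism classes may be represented over simply connected manifolds (which itself needs an argument --- the paper proves $\M_b$ is simply connected using the $S^1$ case of the fold-cancellation argument, Remark~\ref{r:3.5}, together with perfectness of $\pi_1\BDiff F_g$ for $g\ge 3$), the absolute statement only yields surjectivity on rational homology. Injectivity requires the relative statement (Proposition~\ref{p:6.6}): if a broken submersion over a compact simply connected $N_0$ restricts over $\partial N_0$ to a bundle with fiber $F_g$, $g\gg\dim N_0$, then that boundary bundle bounds a bundle. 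Without it your claim that the map is a ``rational homology equivalence after group completion'' is unsupported; note also that in the paper Harer stability enters \emph{inside} the geometric propositions (trading and cancelling fold components via Examples~\ref{ex:3.1} and \ref{ex:3.2}), not through a group-completion theorem for a surface monoid, so if you insist on the group-completion route you must specify the monoid structure on $\sqcup_g\BDiff F_g$ and prove the corresponding Tillmann-type statement.

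The second gap is your assertion that $H^*(h\M_\r;\Q)\cong\Q[\kappa_1,\kappa_2,\dots]$ for the formal space attached to colored broken submersions is a ``direct calculation'' from a Madsen--Tillmann-type Thom spectrum. Since the relation admits fold-type singularities, its naive formal-solution spectrum is not $\MTSO(2)$; identifying $B\M_b$ (equivalently $h\M^1_b$) with $\Omega^{\infty-1}\MTSO(2)$ is essentially the Galatius--Madsen--Tillmann--Weiss theorem, a substantial input. The paper is structured precisely to avoid needing it: it uses only the inclusion $h\M^1\subset h\M^1_b$ to factor the Madsen--Tillmann map as $\BDiff F_g\to h\M\to\M_b$, proves geometrically that the composite $\eta$ is an integral homology isomorphism in the stable range (whence $\alpha_*$ is \emph{injective}), and obtains rational \emph{surjectivity} from the Miller--Morita theorem. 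Relatedly, you misstate Miller--Morita: that the universal classes pull back to $\kappa_i$ is mere naturality; what the theorem supplies, and what the paper actually uses, is the algebraic independence of the $\kappa_i$ in $H^*(\BDiff F_g;\Q)$ in the stable range. In your architecture Miller--Morita would be redundant if the earlier steps held, which signals that the injectivity/surjectivity bookkeeping has not been carried through.
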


\begin{remark} In fact, Madsen and Weiss proved a stronger statement, which, in particular, implies that the map $\alpha|\mathop\mathrm{BDiff} F_g$ is an integral homology equivalence in a stable range.
\end{remark}

In the current note we study a new class of flexible maps---the class of colored broken submersions---that provides a good approximation to the class of submersions, retains the sheaf property, and  satisfies a version of the b-principle. More generally, we define colored broken solutions to an open stable differential relation; these enjoy many interesting properties including the following ones. 

\begin{itemize}
\item For an open stable differential relation $\mathcal{R}$ that \emph{does not}  satisfy the b-principle, a stable formal solution of $\mathcal{R}$ can be integrated into a broken solution (Theorem~\ref{th:2}). Thus, stable formal solutions differ from solutions only in broken components of the corresponding broken solutions. 
\item The pullback of a colored broken solution with respect to a generic smooth map is a colored broken solution. Thus, colored broken solutions form a class and therefore possess a moduli space (\S\ref{s:5a}).
\item The class of colored broken solutions satisfies the sheaf property, and therefore it is suitable for study by means of homotopy theory.  
\item Colored broken solutions of an open stable differential relation $\mathcal{R}$ satisfy a weak b-principle (Theorem~\ref{th:2}) even if solutions of $\mathcal{R}$ do not. 
\end{itemize}


To begin with we introduce the broken submersions/solutions in section \S\ref{s:2}. In sections \S\ref{s:4c}-\ref{sec:5} we recall the notions of a concordance and bordism. Next we show that the class of broken submersions approximates well the class of submersions (\S\ref{s:4a}-\S\ref{s:3}); in \S\ref{s:3} we essentially prove Theorem~\ref{th:4.2} (a complete proof is given in \S\ref{s:7c}). 

\begin{theorem}\label{th:4.2} 
Let $f\co M\to N$ be an oriented broken submersion of dimension $2$ to a simply connected manifold $N$. Suppose that the image of broken components of $f$ in $N$ is disjoint from $\partial N$; in particular, over $\partial N$ the map $f$ is a fiber bundle with fiber $F_g$. Suppose that $g\gg \dim N$.
Then $f$ is bordant to a fiber bundle by bordism which is a broken submersion itself. 
\end{theorem}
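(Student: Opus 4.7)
My strategy is to localize in $N$ around the image of the broken components of $f$, resolve each broken piece by a bordism that exploits the high-genus hypothesis, and then glue the local resolutions into a single global bordism using the simply connectedness of $N$. Since the image of the broken locus is a compact subset of $N\setminus\partial N$, I first cover it by a finite collection of coordinate balls $U_i\subset N\setminus\partial N$ on each of which $f$ admits a standard normal form. In relative dimension $2$ such a local model should be a one-parameter family of surface surgeries (a Cerf/Lefschetz-type degeneration) which changes a nearby fiber of $f$ by adding or removing a handle.

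Over each $U_i$ I would construct a bordism $\tilde f_i\co W_i\to [0,1]\times U_i$ from the restriction $f|f^{-1}(U_i)$ to a genuine surface bundle whose fiber has genus close to $g$. The key point is that resolving a standard local broken piece modifies the fiber by a number of handle operations bounded in terms of $\dim N$, so the hypothesis $g\gg\dim N$ provides enough room in the generic fiber to absorb the resolution while keeping a surface bundle structure; each $\tilde f_i$ is itself a broken submersion by construction. Outside the union of the $U_i$ I would extend by the trivial product bordism $[0,1]\times f$, and on a collar of $\partial N$ I would use the product bordism so that the map over $\partial N$ stays untouched and the final map remains a fiber bundle there.

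The main obstacle is the global assembly: the independently chosen resolutions $\tilde f_i$ need not match on overlaps $U_i\cap U_j$, and the discrepancy between two of them represents a loop in the classifying space of the orientation preserving diffeomorphism group of a high-genus surface. To kill these patching obstructions I would use $\pi_1 N=0$ together with Harer stability---simply connectedness of $N$ reduces the comparison of local choices to a single global choice, while Harer stability in the stable range of genus provides the flexibility to homotope the choices into agreement through broken-submersion bordisms. Verifying that the resulting global bordism $\tilde f\co W\to [0,1]\times N$ is itself a broken submersion, i.e.\ that the gluing procedure introduces no new uncontrolled singular behavior along the seams, is the technical core of the argument and is presumably what forces the complete proof into the later section \S\ref{s:7c}.
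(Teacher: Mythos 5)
The paper's proof of Theorem~\ref{th:4.2} is an abstract consequence of the main homology isomorphism: once $\eta_*\co H_*(\BDiff F_g)\to H_*(\M_b)$ is shown to be an isomorphism in a stable range (Propositions~\ref{p:6.3} and \ref{p:6.6}), one observes that the pair $(\M_b,\BDiff F_g)$ classifies exactly the relative bordism classes in question, and $\Omega_*(\M_b,\BDiff F_g)=0$ for $*\ll g$. Your proposal instead attempts a direct geometric construction, so I will compare it with the geometric core of the paper, which is Propositions~\ref{p:6.3} and \ref{p:6.6}.

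There is a genuine gap in your localization step, and it is not merely technical. The image $f(\sigma)$ of a breaking component is a cooriented codimension-one hypersurface in $N$, and the topology of the fiber of $f$ changes as one crosses $f(\sigma)$: that is the whole point of a fold. Consequently, no bordism whose support lies over a small tubular neighborhood $U_i$ of a piece of $f(\sigma)$ can turn $f|f^{-1}(U_i)$ into a surface bundle, because on the two sides of $f(\sigma)\cap U_i$ the fibers of $f$ remain different, and they must continue to agree with the fibers of the ambient map $f$ at the boundary of the support. In other words, a resolution to a fiber bundle must propagate all the way across one of the regions cut out by $f(\sigma)$; it cannot be done locally near the fold image. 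The paper's construction (in the proof of Proposition~\ref{p:6.3}) therefore does something opposite to what you propose: using $\pi_1 N = 0$ it shows $N\setminus f(\sigma)$ has two components, picks out the closed region $S$ bounded by $f(\sigma)$, and builds a concordance supported over a neighborhood of the \emph{whole} of $S$ (more precisely, over the image of an embedding $h$ of the double $\tilde S$ into $M$). The fold is slid across $S$ and cancelled there. The required embedding $h$ with section properties is Lemma~\ref{l:6.4}, and Harer stability enters precisely there (via Example~\ref{ex:3.1}), not in a gluing step.

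A secondary gap: your gluing obstruction argument is too optimistic. Patching independently chosen local resolutions of a surface bundle produces obstructions in all the homotopy groups of $\BDiff F_g$, not just $\pi_1$; simple connectedness of $N$ does not by itself reduce the matching problem to a single global choice. The paper sidesteps the local-to-global problem entirely by doing one global concordance per breaking component, and then by passing the relative statement through the bordism interpretation of $(\M_b,\BDiff F_g)$. If you want to salvage a hands-on proof, the move you are missing is to replace tubular neighborhoods of the fold image by the bounded domains $S$, and to replace your ad hoc gluing with the section-construction argument of Lemma~\ref{l:6.4}.
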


Theorem~\ref{th:4.2} relies heavily on the Harer stability theorem, and its proof is very much in spirit of a singularity theoretic argument by Eliashberg, Galatius and Mishachev in \cite{EGM}. 

 Next  we review the weak b-principle (\S\ref{s:5a}), and introduce the colored broken submersions (\S\ref{s:5}). The moduli space $\M_b$ of colored broken submersions is an H-space with coherent operation.  Its classifying space $B\M_b$ is known; it has essentially been determined in \cite{GMTW} (for a proof in present terms, see \cite{Sa}). 
Finally, in section~\ref{s:7c} we show that in view of the Harer stability theorem and the Miller-Morita theorem, the Madsen-Weiss theorem follows from the weak b-principle for colored broken submersions.

Colored broken submersions  are similar to (but have better properties than) marked fold maps. In particular, the moduli space of colored broken submersions of dimension $d$ is an appropriate homotopy colimit of classifying spaces $\mathop\mathrm{BDiff} M$ of diffeomorphism groups of manifolds of dimension $d$ with certain boundary components, compare with the original paper \cite{MW}. Colored broken submersions should be compared with enriched fold maps from \cite{EGM} of Galatius-Eliashberg-Michachev who used them to give a topological proof of the Madsen-Weiss theorem.  Note, however, that in contrast to enriched fold maps, colored broken submersions behave well with respect to taking pullbacks and possess a moduli space (\S\ref{s:5a}). We adopt much of the singularity theory technique from \cite{EGM}, but we do not use the major authors' tool: the Wrinkling theorem.  The determination of the classifying space $B\mathcal{M}_b$ is essentially from \cite{GMTW} (however, the rest of their proof of the Madsen-Weiss theorem is not necessary in the current setting). 

\subsection*{Acknowledgement} I am grateful to Soren Galatius for his generous help; the key idea to use $\mathcal{I}$-spaces (e.g., see \cite{Sch}) to link the b-principle to the Mumford conjecture is his. I would also like to thank Ivan Mart\'{i}n Protoss for presenting the material of the note in a series of talks in Topology Seminar in CINVESTAV. This paper was partially written while I was staying at the Max Planck Institute for Mathematics.

\section{Broken solutions}\label{s:2}

Given a smooth map $f\co M\to N$, a point $x\in M$ is said to be \emph{regular} if in a neighborhood $U$ of $x$ the map $f|U$ is a submersion. 
A point $x\in M$ is a \emph{fold point} if there are coordinate charts about $x$ and $f(x)$ such that 
\begin{equation}\label{eq:1}
      f(x_1,..., x_m) = (x_1,..., x_{n-1}, \pm x_n^2 \pm x_{n+1}^2 \pm \cdots \pm x_m^2),
\end{equation}
where $n$ is the dimension of $N$, and $x_1,..., x_m$ are coordinates in the coordinate chart about $x$. If every point in $M$ is regular or fold, then $f$ is said to be a \emph{fold map}.  
 It immediately follows from the local coordinate representation (\ref{eq:1}) of $f$ that the set of fold points of $f$ is a submanifold of $M$ of codimension $d+1$ where $d=\dim M-\dim N$.
 
\begin{figure}[htb]
\centering
\includegraphics[height=1.4in]{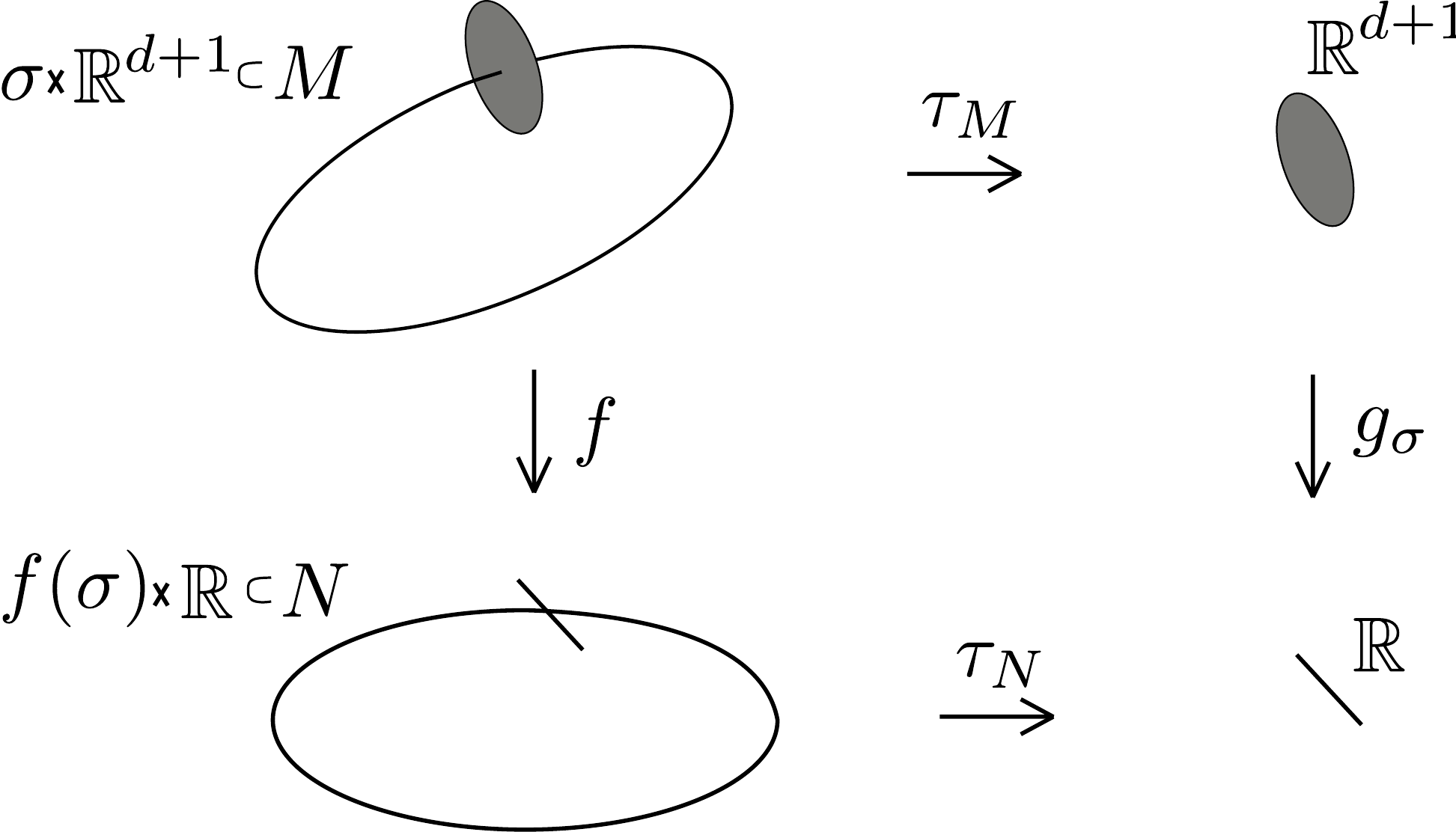}
\caption{A breaking component.}
\end{figure}

 Suppose that a path component $\sigma$ of fold points of $f$
is closed in $M$ and the restriction $f|\sigma$ is an embedding. 
Suppose that there is a submersion $\tau_M$ of a neighborhood of $\sigma$ in $M$ onto a neighborhood of $0$ in $\R^{d+1}$ such that the inverse image of $0$ is precisely $\sigma$. Then the map $\tau_M$ \emph{trivializes} the normal bundle of $\sigma$, though we do not fix a diffeomorphism of a neighborhood of $\sigma$ onto $\sigma\times \R^{d+1}$. Similarly, suppose that there is a map $\tau_N$ of a neighborhood of $f(\sigma)$ to $\R$ trivializing the normal bundle of $f(\sigma)$ in such a way  that $\tau_N\circ f= g_{\sigma}\circ \tau_M$ on the common domain, where $g_\sigma$ is a Morse function on $\R^{d+1}$ with one critical point. 
Then we say that $\sigma$ is a broken component;  the maps $\tau_M$ and $\tau_N$ are parts of the structure of a broken component. The minimum of the indices of the critical points of $g_\sigma$ and $-g_\sigma$ is called the \emph{index} of $\sigma$. 

\begin{remark}
The normal bundle in $M$ of a component $\sigma$ of fold points of a general fold map $f$ is not trivial, and $f|\sigma$ is not necessarily an embedding. Therefore not every component of fold points of a fold map admits a structure of a broken component. In fact, even if $f|\sigma$ is an embedding and the normal bundles of $\sigma$ in $M$ and $f(\sigma)$ in $N$ are trivial, the component $\sigma$ may still not admit a structure of a breaking component since $f$ near $\sigma$ may be twisted. 
\end{remark}

\begin{remark} Broken components of index $0$ are not compatible with certain nice structures including the structure of broken Lefschetz fibrations in the case of maps of $4$-manifolds into surfaces. For this reason in the general setting in \cite{Sa1} we prohibited broken components of index $0$  and proved the weak b-principle in the form of Theorem~\ref{th:2} with a less restrictive assumption of indices $\ne 0$. For the argument in the present paper, however, it is convenient to allow broken components of index $0$ (so that the space $\M_b$ in \S\ref{s:5} is connected). 
\end{remark}

Given an open stable  differential relation $\r$ imposed on maps of dimension $d$, suppose a map $f$ away of the broken fold components is a solution. Then we say that $f$ is a \emph{broken solution} of $\r$.  

\section{Bordisms}\label{s:4c}

We need the notion of an oriented bordism of maps of manifolds with boundaries. An \emph{oriented bordism} of a manifold with boundary is an oriented bordism with support in the interior of the manifold. An oriented bordism of maps is defined appropriately. 

\begin{definition} 
Let $M$ be an oriented compact manifold with corners  such that $\partial M$ is the union of $-M_0, M_1$ and $\partial M_0\times [0,1]$ where $\partial M_0\times \{i\}$ and $\partial M_i$  are identified for $i=0,1$, see Figure~\ref{fig:4}. In particular, the manifolds $\partial M_0$ and $\partial M_1$ are canonically diffeomorphic. The corners of $M$ are along $\partial M_0\times\{i\}$. 
\begin{figure}[h]
 \centering
\includegraphics[height=1.8in]{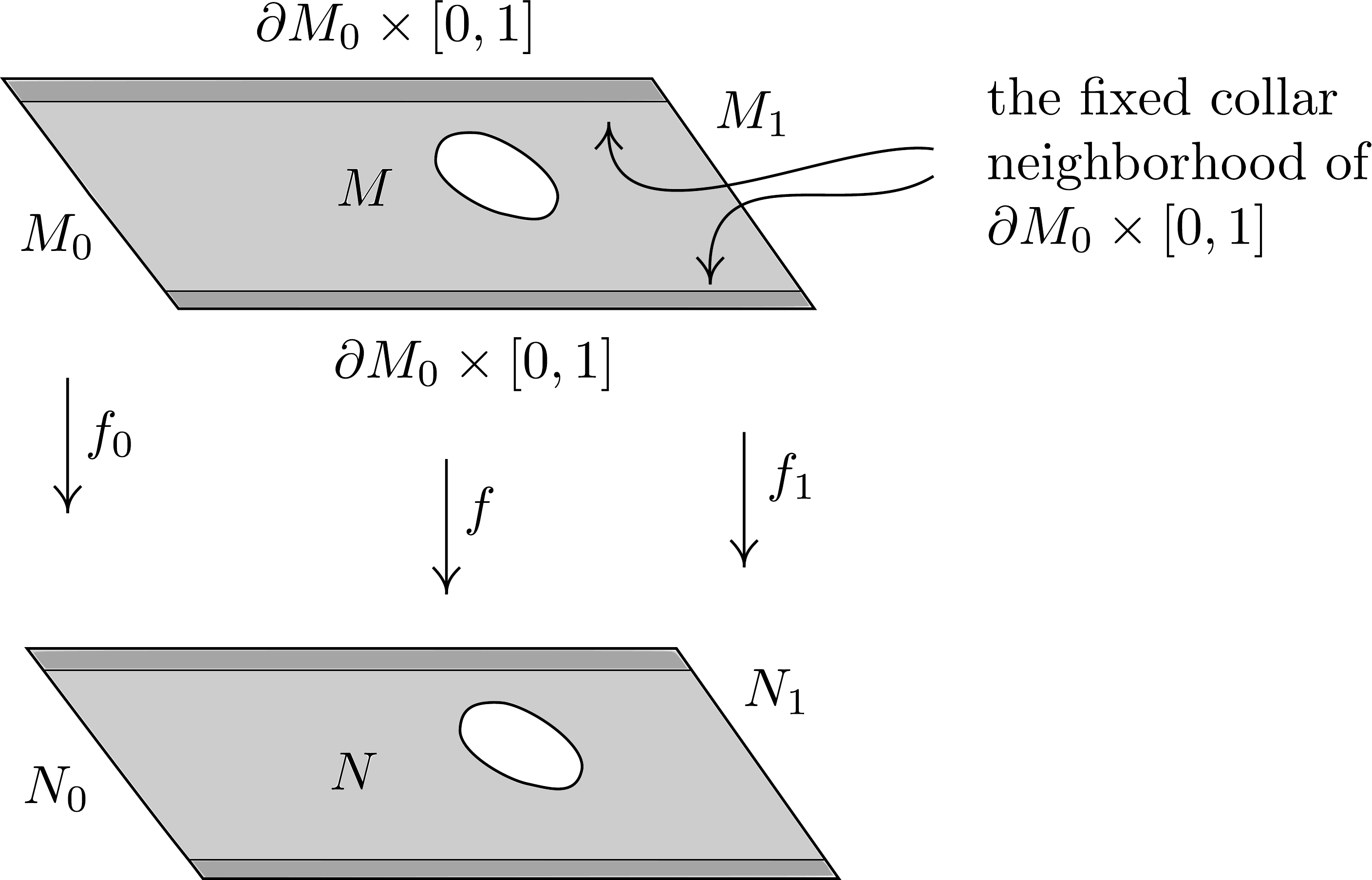}
\caption{An oriented bordism.}
\label{fig:4}
\end{figure}
Let $N$ be an oriented compact manifold with corners and with a similar decomposition of the boundary. Let $f\co M\to N$ be a map that preserves the decompositions. In particular, appropriate restrictions of $f$ define two maps \[
f_i\co (M_i, \partial M_i)\to (N_i, \partial N_i),  \qquad \mathrm{where}\quad i=0,1.
\]  We say that $f$ is an  \emph{oriented bordism} from $f_0$ to $f_1$ if  $f=f_i\times\id$ and $f=f_0\times \id_{[0,1]}$ over collar neighborhoods of  $M_i$ and 
 $\partial M_0\times [0,1]$ respectively.   
If $f_0, f_1$ belong to some class of maps, then we require that $f$ belongs to the same class. For example, a bordism of fiber bundles is a fiber bundle. 
\end{definition}
 
The product map $F_0\times \id_{[0,1]}\co M_0\times [0,1]\to N_0\times [0,1]$ is said to be a \emph{trivial} bordism. Let $m_0\subset M_0$ be a compact submanifold of codimension zero, and $f\co m\to N_0\times [0,1]$ a bordism of $f_0=F_0|m_0$. Then, there is a well-defined bordism $F\co M\to N_0\times [0,1]$ where $M$ is obtained from $M_0\times [0,1]$ by removing $m_0\times [0,1]$ and attaching $m$ along the new fiberwise boundary. The map $F$ coincides with $f$ over $m$ and with $F_0\times \id_{[0,1]}$ over the complement to $m$. We say that $F$ is  a bordism of $F_0$ with \emph{support} in $m_0$ and with core $f$.

\section{Concordances}\label{s:3c}

A bordism $M\to N$ of maps is said to be a \emph{concordance} if  the manifold $N$ is a product $N_0\times [0,1]$, and the decomposition of the boundary is the obvious one with $N_1=N_0\times \{1\}$. 
Thus, for example, two proper maps $f_i\co M_i\to N$ with $i=0,1$ of manifolds with empty boundaries are said to be \emph{concordant} if there is a proper map $f\co M\to N\times [0, 1]$ together with diffeomorphisms 
\[
f^{-1}(N\times [0, \varepsilon))\approx M_0\times [0, \varepsilon), \qquad  
f^{-1}(N\times (1-\varepsilon, 1])\approx M_1\times (1-\varepsilon, 1]
\]
onto collar $\varepsilon$-neighborhoods of $M_0$ and $M_1$ for some $\varepsilon>0$ such that  in view of these identifications  
\[
f|f^{-1}(N\times [0, \varepsilon)) = f_0\times \id_{[0, \varepsilon)}, \qquad  
f| f^{-1}(N\times (1-\varepsilon, 1])= f_1 \times  \id_{(1-\varepsilon, 1]},
\]   
see Figure~\ref{fig:8}. 
A concordance of maps of a given type is required to be a map of the same type, e.g., a concordance of submersions is a submersion. 
 
\begin{figure}[h]
\centering
\begin{minipage}{.5\textwidth}
\centering
\includegraphics[height=1.4in]{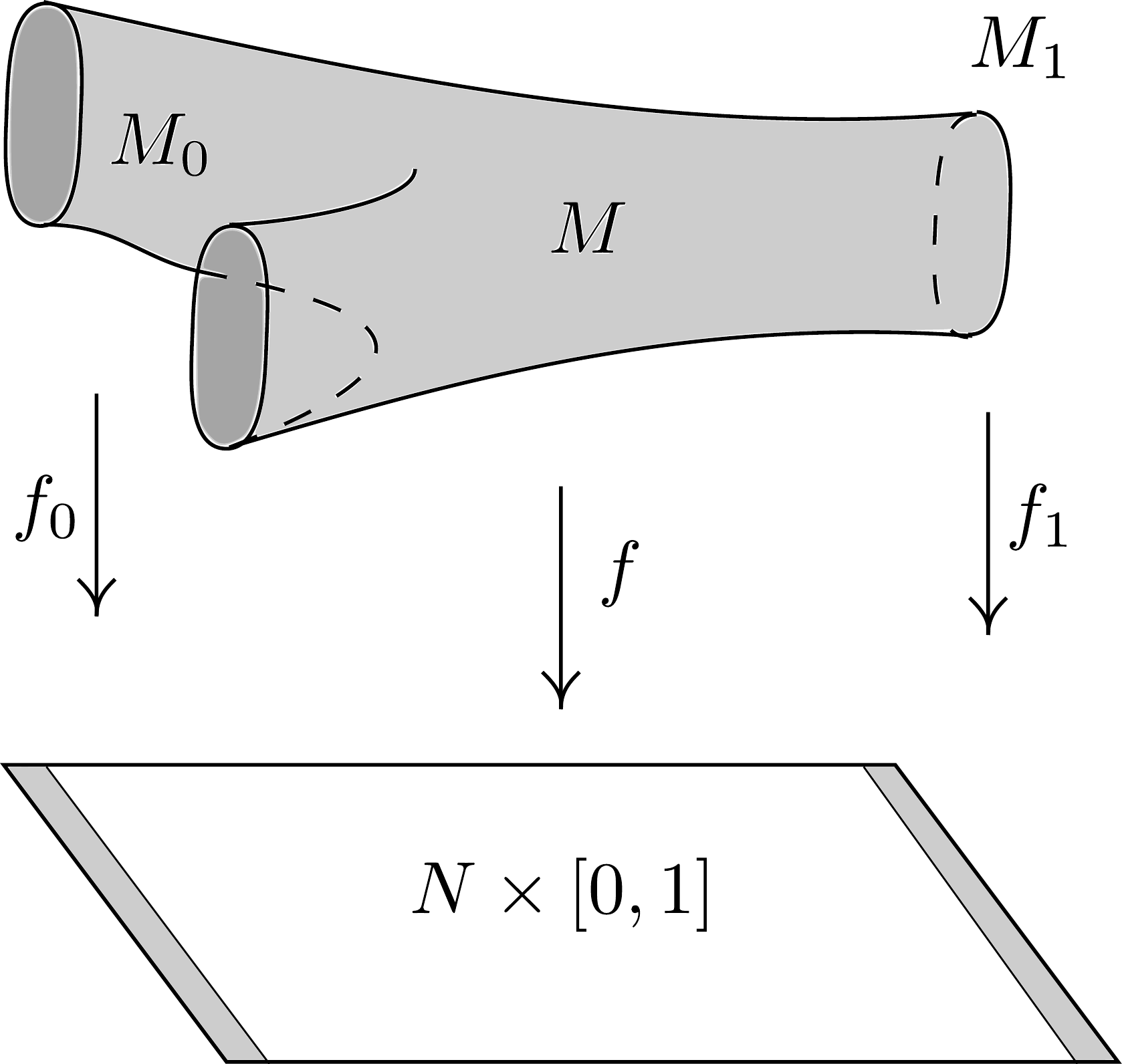}
\caption{Concordance}
\label{fig:8}
\end{minipage}%
\begin{minipage}{.5\textwidth}
  \centering
\includegraphics[height=1.4in]{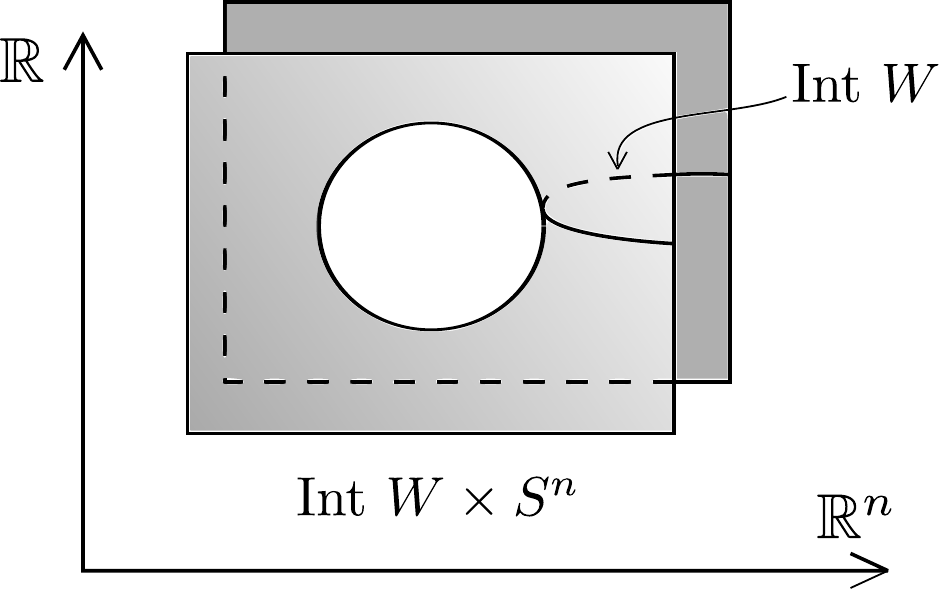}
\caption{The map $(g, \alpha)$}
\label{fig:8a}
\end{minipage}
\end{figure}

One concordance, called \emph{breaking}, is of particular interest. It is constructed by means of a compact manifold $W$ of dimension $d$, and a proper Morse function $f$ on the interior of $W$ with values in $(0, \infty)$. Suppose  that $f^{-1}[1,\infty)$ is diffeomorphic to $\partial W \times [1, \infty)$ and, furthermore, the restriction
of $f$ to the latter is the projection onto $[1,\infty)$. Then
\[
(g,\alpha): \mathop\mathrm{Int} W\times S^n \xrightarrow{f\times \mathop\mathrm{id}} (0,\infty)\times S^n \stackrel{\subset}\longrightarrow \R^{n+1}\simeq  \R^n \times \R
\]
is a broken submersion  \cite{Sa1}, see Fugure~\ref{fig:8a}. The inclusion $(0, \infty) \times S^n \subset \R^{n+1}$ in the composition takes a scalar $r$ and a vector $v \in \R^{n+1}$ of length $1$ to $rv$.
By  \cite[Proposition 4.2]{Sa1}, the map $g$ is also a fold map.

Let $i_A$ denote the inclusion of a subset $A$ into $\R$, and let $(g_A , \alpha_A)$ denote the pullback of the map $(g,\alpha): \mathop\mathrm{Int} W \times S^n \to \R^n\times \R$ with respect to 
\[
(i_A\times  {\mathop\mathrm{id}}_{\R^{n-1}})\times {\mathop\mathrm{id}}_\R: (A\times \R^{n-1})\times \R \longrightarrow \R^n \times \R.
\]
Then $(g_{[0,1]}, \alpha_{[0,1]})$ is a concordance, see Figure~\ref{fig:9}. Its inverse is  a concordance from $(g_1, \alpha_1)$ to $(g_0, \alpha_0)$. It is called the \emph{standard model for breaking concordances} as this concordance breaks fibers of a submersion.

\begin{figure}[h]
\centering
\centering
\includegraphics[height=1.5in]{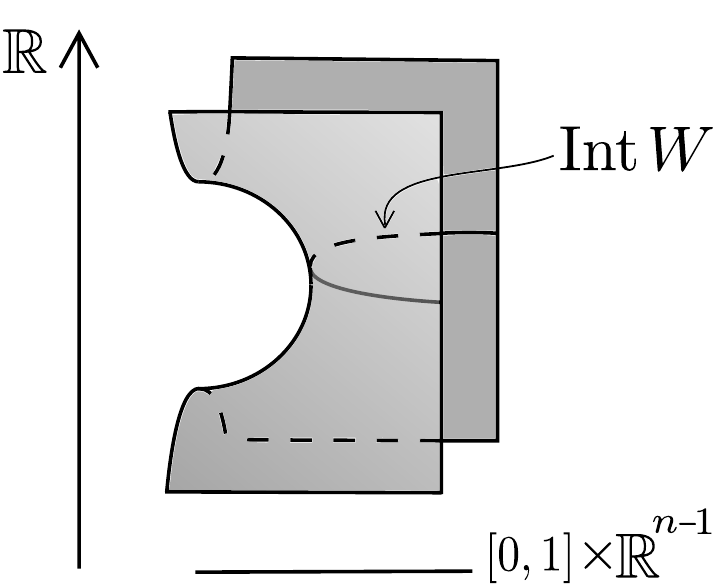}
\caption{Breaking concordance}
\label{fig:9}
\end{figure}

Finally, for any map $(f, \alpha): W\to N\times \R$ and any of its regular points $p$, there is a neighborhood $U\approx \R^{d+n-1}\times \R$ of $p$ in $W$ such that $(f, \alpha)$  has the form $(g_1, \alpha_1)$ over $U$. We say that a concordance of $(f, \alpha)$ is \emph{breaking} if it coincides with the standard model for breaking concordances over $U$, and it is trivial elsewhere (i.e., it has support in $U$).



\section{Basic concordances}\label{sec:5}

We will show that Theorem~\ref{th:4.2} follows from the Harer stability theorem. The argument is in spirit of that by Eliashberg-Galatius-Mishachev in \cite{EGM}. In this section we consider two basic concordances that will play an important role in the proof. 

\begin{example}\label{ex:2.6} Let $\pi\co E\to N$ be a fiber bundle with fiber a surface $F_g$ of genus $g$. Let  
$D_1, D_2$ be two disjoint submanifolds of $E$ such that $\pi|D_i$ is a trivial disc bundle over $N$. In particular, $D_i=N\times D^2$. We aim to construct a broken fold concordance of $\pi$ to a fiber bundle with fiber $F_{g+1}$, see Figure~\ref{fig:mum1}. 

Constant maps of $D^2\sqcup D^2$ and  $D^1\times S^1$ to a point are concordant by means of a Morse function $u\co W\to [0,1]$ with a unique critical point (of index $1$), see Fig.~\ref{fig:11}. Let $\Pi$ be the concordance of $\pi$ with support in $D_1\sqcup D_2$ and with core $\id\times u\co N\times W\to N\times [0,1]$. Then $\Pi$ is a \emph{stabilizing concordance}; it attaches to each fiber $F_g$ a handle, see Figure~\ref{fig:mum1}. 

A stabilizing concordance also exists in a slightly more general setting where $\pi\co E\to N$ is a broken fibration, and $D_1, D_2$ two disjoint submanifolds of $E$ such that each $\pi|D_i$ is a trivial disc bundle over $N$. 
\end{example}

\begin{figure}[h]
\centering
\begin{minipage}{.55\textwidth}
\centering
\includegraphics[height=1.5in]{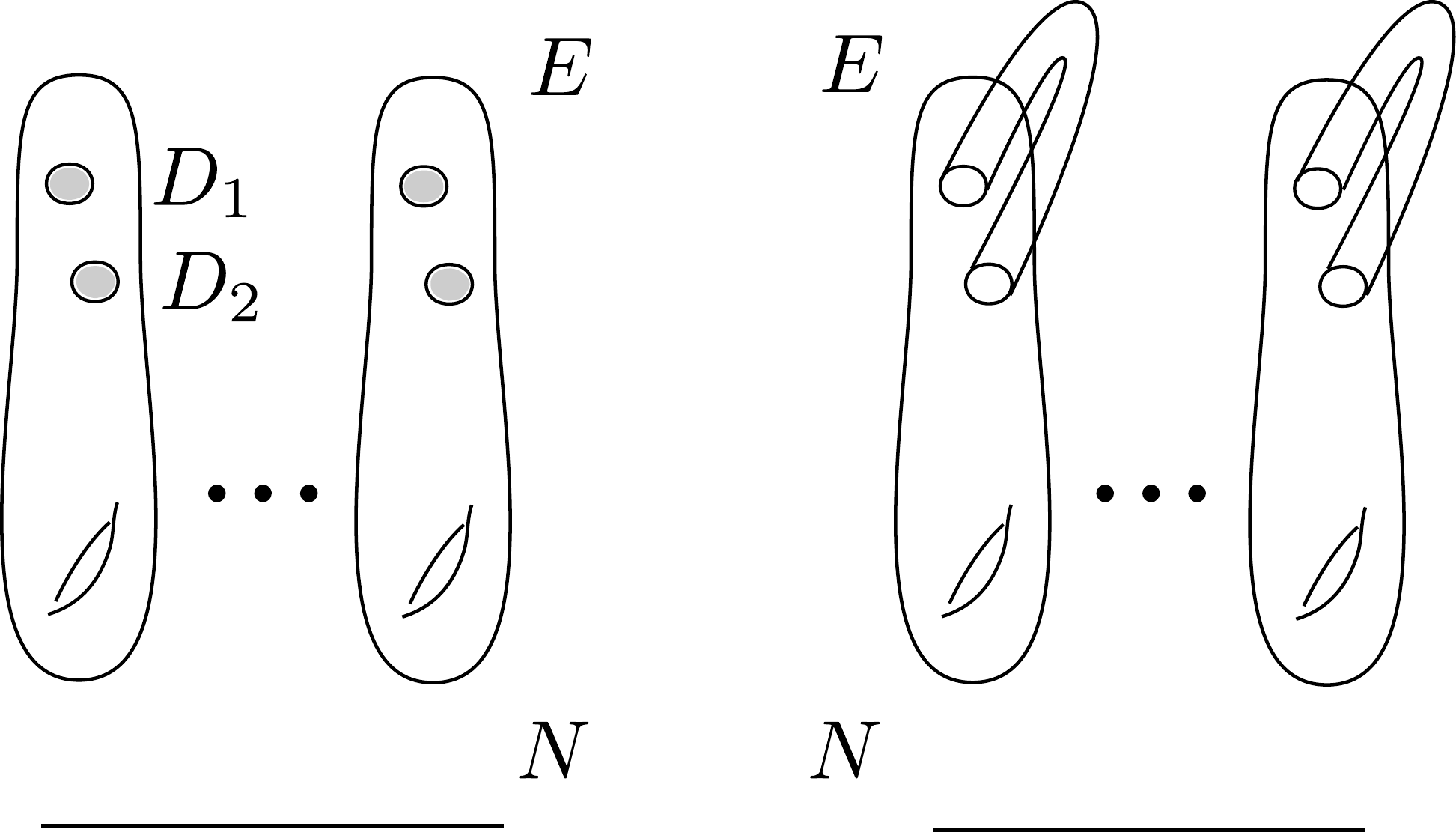}
\caption{A stabilizing concordance.}
\label{fig:mum1}
\end{minipage}%
\begin{minipage}{.55\textwidth}
  \centering
\includegraphics[height=1.5in]{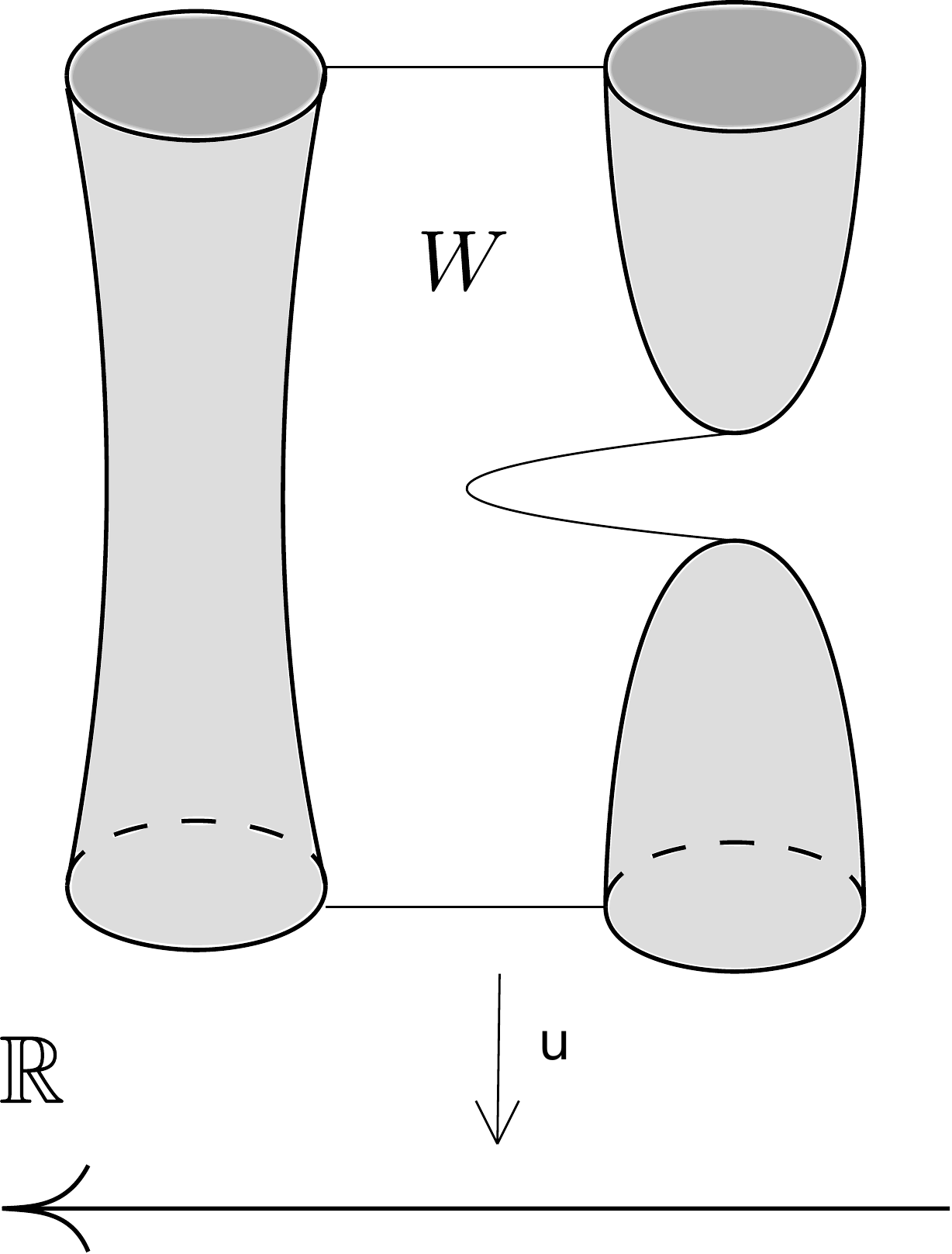}
\caption{Cobordism $W$.}
\label{fig:11}
\end{minipage}
\end{figure}


In general, however, a given fiber bundle $\pi\co E\to N$ may not contain trivial disc subbundles. For this reason we also introduce a concordance of Example~\ref{ex:2} which stabilizes fibers locally, only over a subset $U\subset N$; such a concordance always exists. First we will explain the construction in the model case where $N\subset \R^n$ is a disc and $\pi$ is a disjoint union of two disc bundles, and then we consider the general case. The fibers of this concordance are presented on Figure~\ref{fig:3}.

\begin{example}\label{ex:2}
 For the construction we will need a compact manifold $W$, and a proper Morse function $h$ on the interior of $W$ such that the fibers of $h$ over negative and positive values are $D^2\sqcup D^2$ and $D^1\times S^1$ respectively, compare $h$ with the function $u$ on  Figure~\ref{fig:11}.  
 
 Let $f_0$ be the disjoint union of two trivial disc bundles $D_i=D^2\times N\to N$, $i=1,2$, over the standard open disc $N\subset \R^n$ of radius $1$. Let $U\subset N$ be the concentric closed subdisc of radius $0.5$, see the part of Figure~\ref{fig:3} over $N\times \{0\}$. 
 
\begin{figure}[h]
\centering
\includegraphics[height=1.5in]{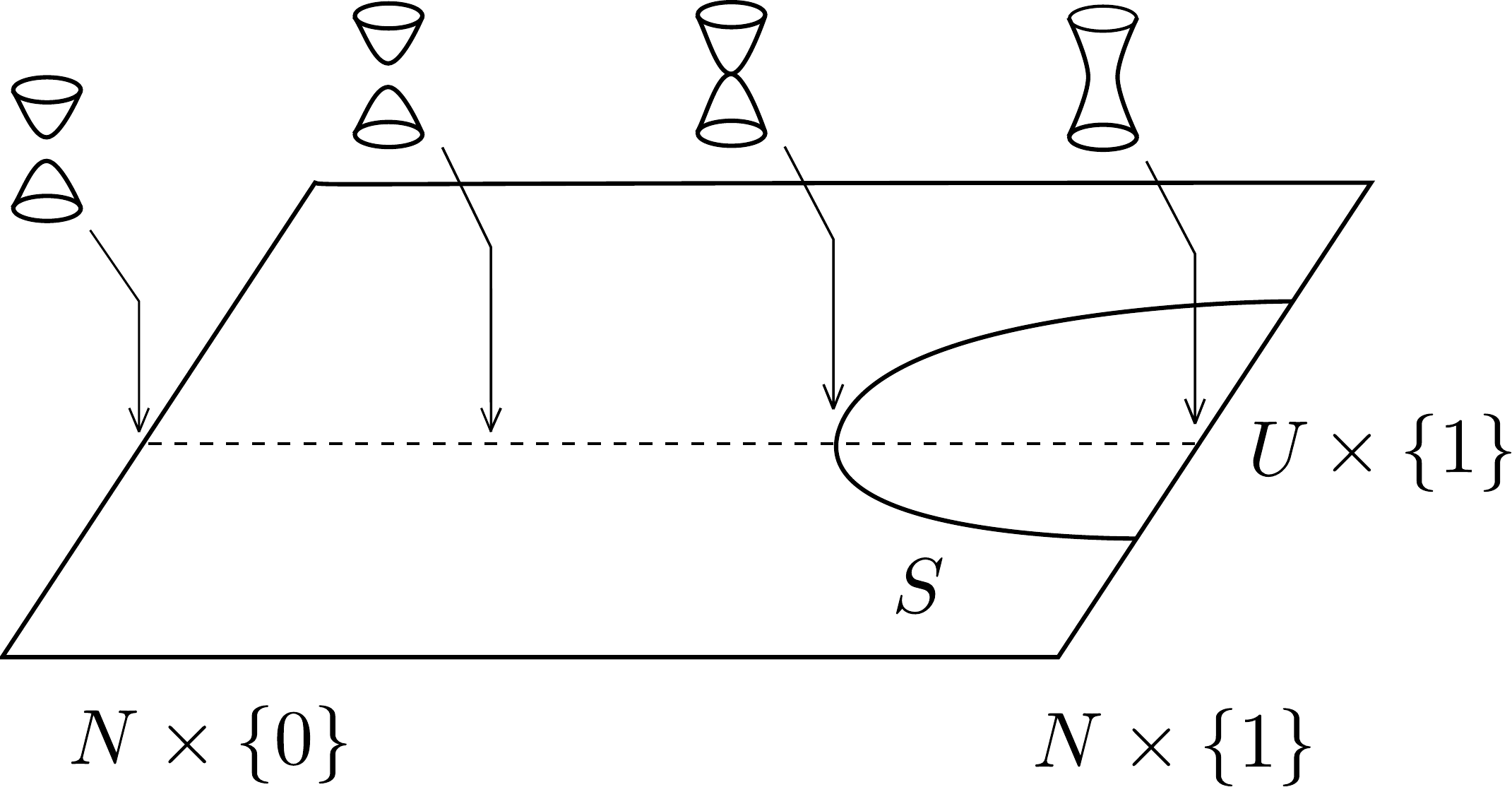}
\caption{Fibers over $N\times [0,1]$.}
\label{fig:3}
\end{figure}

 Let $S$ be the lower hemisphere of the sphere in $N\times [0,1]\subset \R^n\times \R$ of radius $0.5$ centered at $\{0\}\times \{1\}$; it meets the boundary $N\times \{1\}$ transversally along $\partial S=\partial U\times \{1\}$ and the projection of the interior of $S$ to $N\times\{1\}$ is a diffeomorphism onto the interior of $U\times \{1\}$, see Figure~\ref{fig:3}. We define $f$ to be the broken submersion to $N\times [0, 1]\subset \R^{n+1}$ given by the restriction of 
 \[
          W\times S^n \xrightarrow{h\times \id_{S^n}} \R\times S^n \longrightarrow \R^{n+1},
 \]
 where the second map in the composition takes a real number $\lambda$ and a vector $v$ of length $1$ to $\lambda v+e_{n+1}$.  Thus, over a neighborhood $S\times \R$ of $S$ in $N$ the concordance $f$ is given by $\id_S\times h$ and over each path component of the complement to $S$ it is a trivial fiber bundle. 

We will use this concordance in a more general setting.

 Let $f_0$ be a broken  submersions $E\to N$ and $U\subset N$ a small disc with smooth boundary. We aim to construct a concordance which 
 attaches to each fiber over the interior points of $U$ a handle. 
We identify $U$ with a closed ball in $\R^n$ of radius $0.5$, and a neighborhood $V$ of $U$ in $N$ with an open ball of radius $1$. If $U$ is sufficiently small, then  $E|f_0^{-1}V$ contains two disjoint submanifolds $D_1$ and $D_2$  such that each $f|D_i$ is a trivial disc bundle over $V$. 
 We have constructed the concordance of $f_0|D_1\sqcup D_2$. Since it is trivial near the fiberwise boundary, we can extend the constructed concordance trivially to a concordance of $f_0|f_0^{-1}(V)$. Since the obtained concordance is trivial near $f_0^{-1}(\partial V)$, we may extend it trivially to a desired concordance of $f_0$.  
 \end{example}

An important consequence of the concordance in Example~\ref{ex:2} is the following proposition.

\begin{proposition}\label{p:5.3c} Let $f_0\co M\to N$ be a broken submersion over a compact manifold. If over (possibly empty) $\partial N$ the original map $f_0$ is a fiber bundle with fiber $F_g$ of genus $g\gg \dim N$, then $f_0$ is concordant to a broken submersion $f_1$ with connected fibers such that each regular fiber is of genus $\gg \dim N$. 
\end{proposition}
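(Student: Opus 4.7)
The plan is to construct $f_1$ by iteratively applying the local stabilization concordance from Example~\ref{ex:2}. Since $f_0|\partial N$ is an $F_g$-bundle with $g\gg \dim N$, I would first extend this bundle structure into a collar $\partial N\times [0,\varepsilon)\subset N$. All concordances constructed below will be supported inside the region $N':=N\setminus(\partial N\times [0,\varepsilon/2))$, so the restriction of $f_1$ over $\partial N$ remains the given $F_g$-bundle, and the genus hypothesis over $\partial N$ is automatically inherited by $f_1$.

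By compactness of $N$, I would choose a finite cover of $N'$ by small closed discs $U_1,\ldots,U_k$, each contained in a slightly larger open disc $V_i$ over which $f_0$ trivializes as a surface bundle. This is possible because the image in $N$ of the broken locus of $f_0$ has codimension at least one, so after a small perturbation most $V_i$ can be chosen disjoint from it; the remaining ones can be handled by first raising the genus over the disjoint $V_i$'s, which leaves enough room inside each fiber to find disjoint disc subbundles avoiding the isolated fold singularities. Over each $V_i$ I would pick two disjoint trivial disc subbundles $D_1^i,D_2^i$ of $f_0$ and apply Example~\ref{ex:2}'s concordance: it attaches a one-handle to every fiber over the interior of $U_i$ and introduces a new broken component along $\partial U_i$.

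The two discs are selected according to the current goal. To merge two components of a disconnected fiber $F_p$, I would place $D_1^i$ and $D_2^i$ in distinct components of $F_p$ so that the attached handle connects them; to raise the genus, I would place both discs in the same component of $F_p$. By iterating first enough mergings to make every regular fiber connected, and then enough further stabilizations to push every regular fiber's genus above $\dim N$, I obtain the required $f_1$.

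The main obstacle I expect is the bookkeeping: each local stabilization introduces new broken components along $\partial U_i$, subdividing $N$ into smaller regions over which the fiber topology must be tracked separately, and in which the next round of discs must be chosen. Care is needed to order and place the stabilizations so that \emph{every} regular fiber of the resulting $f_1$---including the fibers arising near both the old broken image and the newly introduced breaks---is connected and of sufficient genus. Compactness of $N$ together with the purely local nature of each concordance are what keep this bookkeeping finite.
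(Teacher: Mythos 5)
The paper does not spell out a proof of Proposition~\ref{p:5.3c}; it simply introduces it as ``an important consequence of the concordance in Example~\ref{ex:2}.'' Your proposal fills this gap with exactly the expected ingredient, namely a finite cover of the complement of a boundary collar by small discs and repeated application of the local stabilizing concordance of Example~\ref{ex:2}, with the $D_1,D_2$ chosen to merge components when needed. The strategy is the right one.

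However, the worry you flag about bookkeeping is a real gap, and ``compactness keeps the bookkeeping finite'' does not by itself close it. Each application of Example~\ref{ex:2} over $U_i$ leaves the fiber over $V_i\setminus\bar U_i$ unmodified and adds a new broken wall along $\partial U_i$; naively iterating therefore keeps producing thin shells over which the original disconnected fibers survive, so one must explain why the process terminates with the desired conclusion. The way to repair this is not an inductive shrinking argument but a \emph{simultaneity} argument: choose the larger discs $V_i$ and the disc subbundles $D_1^i, D_2^i$ once and for all with respect to $f_0$ (first applying a few stabilizations to give the fibers enough genus so that a large supply of pairwise disjoint trivial disc subbundles exists), so that all the handle cores are pairwise disjoint in $M$. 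Then all the Example~\ref{ex:2} concordances have disjoint supports and can be applied independently and in any order. Because the open sets $\mathop\mathrm{int}(U_i)$ cover $N'$, every point $p\in N'$ lies in the interior of some $U_i$, and the handles attached there connect the components of the fiber over $p$; no residual shell of disconnected fibers is left, and you never have to re-choose data after seeing new walls. This observation, not finiteness alone, is what makes the plan work, and it should be stated.

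Two smaller points. First, you describe the fold set as ``isolated fold singularities''; for $d=2$ the fold locus is a codimension-$3$ submanifold of $M$ whose image is a codimension-one submanifold of $N$, so it is isolated only when $\dim N = 1$. What you actually need (and what Example~\ref{ex:2} already provides for a \emph{broken} submersion $f_0$) is that for $U$ sufficiently small, trivial disc subbundles $D_1,D_2$ over $V$ exist disjoint from the fold locus; this holds because near the folds the fiber has room away from the vanishing region once the genus is large. Second, the conclusion calls for genus $\gg \dim N$, not merely $>\dim N$; this is harmless since one may attach as many extra handles as one likes, but worth keeping straight.
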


\section{Folds of index $0$}\label{s:4a}


\subsection{Erasing concordance} Let $F$ be an oriented closed surface, and $N$ an arbitrary manifold. Then the broken submersion given by the projection $N\times F\to N$ is concordant to an empty map. The concordance is given by a broken submersion of $N\times W$ where $W$ is an oriented compact $3$-manifold with $\partial W=F$. For example, if $W$ is the standard $3$-disc of radius $1/\sqrt{2}$, then the \emph{erasing concordance} $\id_N\times h$ where $h(x)=-|x|^2+0.5$ joins the trivial sphere bundle over $N$ with the empty map. 

\subsection{Chopping concordance} Let $\pi\co E\to N$ be a submersion of dimension $2$ with fiber $F_g$ and $D\to N$ a trivial open disc subbundle of $\pi$. A \emph{chopping} concordance chops off a sphere from each fiber. More precisely, a chopping concordance modifies the fiber bundle only inside $D$ so we will assume that $E=D$. There are a bordism $W$ from $D^2$ to $D^2\sqcup S^2$, and a Morse function $f\co W\to [0,1]$ with a unique critical point of index $2$. The desired concordance is $\id_N\times f$.   \\

%
%

The following proposition at least in part appears in \cite{MW} and \cite{EGM}. 

\begin{proposition}\label{prop:6.1} Every proper broken submersion $f_0$ of even dimension $d$ to a compact simply connected manifold $N$ is concordant to a broken submersion $f_1$ with no fold points of index $0$. 
\end{proposition}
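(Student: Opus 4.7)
The plan is to eliminate broken components of index $0$ one at a time, each by erasing an associated trivial $S^d$-subbundle of the fiber.

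Fix an index-$0$ broken component $\sigma$ of $f_0$ with image $\Sigma = f_0(\sigma)$. The normal form of \S\ref{s:2} identifies a neighborhood of $\sigma$ in $M$ with $\sigma \times D^{d+1}$ mapped to $\sigma \times (-\varepsilon,\varepsilon)$ by $(\xi, y) \mapsto (\xi, |y|^2)$ after reparametrization, so $\Sigma$ is a closed codimension-$1$ submanifold of $N$ with trivialized normal bundle, and on the positive side of $\Sigma$ the fiber of $f_0$ gains a small trivial $S^d$-subbundle $\mathcal{S}_0$ born at $\sigma$. Since $N$ is compact and simply connected, $\Sigma$ is $\Z_2$-null-homologous, so it separates $N$ into a decomposition $N = N^+ \cup_\Sigma N^-$ with $\mathcal{S}_0$ born into $N^+$.

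Next, I propagate $\mathcal{S}_0$ to a separated trivial $S^d$-subbundle $\widetilde{\mathcal{S}}$ of the fiber over all of $N^+$, after replacing $f_0$ by a concordant map $\widetilde f$. Following a generic vector field on $N^+$ pointing away from $\Sigma$, the sphere subbundle may either merge with the rest of the fiber at some other broken or fold locus, or persist throughout. At each potential merger, I apply a local chopping concordance in the sense of \S\ref{s:4a} (or its obvious $d$-dimensional analogue) inside a small disc bundle to detach the sphere as its own component of the fiber. A transversality argument, together with the stabilizing concordance of Example~\ref{ex:2.6} applied where additional handles are needed to create the disc bundles that chopping requires, ensures that only finitely many such modifications suffice. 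Simply-connectedness of $N$ then guarantees that the extended oriented $S^d$-bundle $\widetilde{\mathcal{S}}$ is trivial.

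Finally, I apply an erasing concordance with support in $N^+$ to $\widetilde{\mathcal{S}}$. Since $S^d$ bounds the oriented disc $D^{d+1}$, this concordance collapses each $S^d$-fiber to a point through a single Morse critical point in the concordance direction, introducing only singularities transverse to the terminal time slice. At the terminal time the subbundle $\widetilde{\mathcal{S}}$ has vanished, and with it the original broken component $\sigma$, whose sole role was the birth of this sphere bubble. Iterating over all index-$0$ broken components of $f_0$ produces the desired $f_1$.

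The main obstacle is the middle step: globally propagating the sphere subbundle across $N^+$ while controlling its interactions with every other fold locus of $f_0$, and verifying that the chopping modifications used to detach mergers do not themselves leave behind new index-$0$ folds at the terminal time. This is where the simply-connected hypothesis on $N$ is essential, both for triviality of the extended oriented $S^d$-bundle and for the separation of $N$ by $\Sigma$, and where the singularity-theoretic technique adopted from \cite{EGM} enters the argument.
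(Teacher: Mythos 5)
Your middle step---propagating the newborn sphere to a \emph{separated trivial} $S^d$-subbundle over all of $N^+$---is precisely what is not available, and your argument does not supply it; you yourself flag it as ``the main obstacle'' without closing it. As one moves away from $\Sigma$ into $N^+$, the sphere born at $\sigma$ can merge with other sheets of $M$ along fold components of positive index, and ``applying a local chopping concordance at each potential merger'' is not a well-defined remedy: chopping as in \S\ref{s:4a} presupposes a trivial disc subbundle of a submersion and splits off a sphere, whereas undoing a handle joining your sphere to the rest of the fiber is a different, surgery-type modification that itself creates new fold components; you give no control of these, no termination argument, and no verification that the terminal slice carries no new index-$0$ folds. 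The triviality claim is also wrong as stated: simple connectivity of $N$ does not give simple connectivity of the piece $N^+$ (consider the complement of a torus in $S^3$), and even over a simply connected base an oriented $S^d$-bundle need not be trivial, so the product erasing concordance need not apply to $\widetilde{\mathcal{S}}$. Finally, the proposition allows $\partial N\ne\emptyset$, and you never discuss the case where the side of $\Sigma$ you want to work on interacts with $\partial N$.

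The paper's proof goes in the opposite direction and thereby avoids all of these global issues: instead of erasing the sphere on the side where it exists, it \emph{fills it in} on the component $U$ of $N\setminus f_0(\sigma)$ out of which the coorientation points, i.e.\ the side where the local model has empty fibers. Concretely, one chooses a cap $S\subset N\times[0,1]$ with $\partial S=\partial \bar U\times\{0\}$ whose interior projects diffeomorphically onto $U$, and defines the concordance to be $\id_S\times g$ with $g=x_1^2+\cdots+x_{d+1}^2$ over a tubular neighborhood of $S$, and a trivial bundle over each complementary region; this sweeps the index-$0$ fold across $U$, caps off the newborn sphere there, and at time $1$ the fold is gone. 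Everything is dictated by the normal form of a broken component together with the fact (simple connectivity plus Mayer--Vietoris) that $f_0(\sigma)$ separates $N$; no knowledge of how the sphere continues on the other side is needed, since the concordance is trivial there. When $\partial N\ne\emptyset$ and the outward-cooriented side is unavailable, the paper first extends $f_0$ over an external collar by a chop-then-erase concordance, creating an auxiliary index-$0$ component $\sigma'$ so that $f_0(\sigma)\sqcup f_0(\sigma')$ bounds a region with outward coorientation, and then eliminates both. To make your route work you would have to either import this local filling strategy or genuinely repair the propagation, detachment and triviality steps, which is where the real difficulty lies.
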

\begin{proof} Suppose that $N$ is closed. 
Let $\sigma$ be a component of folds of $f_0$ of index $0$, and let $U$ denote one of the two path components of the complement to $f_0(\sigma)$ in $N$ for which the coorientation of $f_0(\sigma)$ is outward directing. The concordance that we construct is trivial outside a neighborhood of $f^{-1}_0(\bar{U})$. Consequently, we may assume that $N$ is a neighborhood of $\bar{U}$. In fact, only  the component containting $\sigma$ is modified, and therefore, by the definition of broken submersions, we may assume that $M=\sigma\times \R^{d+1}$, and $f_0$ is the product of $\id_\sigma$ and $g=x_1^2+\cdots + x_{d+1}^2$ followed by an identification of $\sigma\times \R$ with a neighborhood of $\sigma$ in $N$. Let $S$ be a submanifold in $N\times [0,1]$ such that $\partial S=\partial \bar{U}\times \{0\}$ and the projection of the interior of $S$ to $N$ is a diffeomorphism onto $U$. Over a neighborhood $S\times \R$ of $S$ the map $f$ is given by $\id_S\times g$, while over each of the two components of the complement to $S$ in $N\times [0,1]$, the map $f$ is a trivial fiber bundle. 

Suppose now that $N$ has a non-empty boundary. Let $\sigma$ be a component of folds of index $0$. If $f_0(\sigma)$ bounds $S$ and the coorientation of $\partial S$ is outward directing, then $\sigma$ can be eliminated by the concordance of the first part of the proof.  Suppose $\partial S$ is inward directing. Let $N'$ denote the enlargement of $N$ with a collar $\partial N\times [0,1]$ attached to $N$ by means of an identification of $\partial N\subset N$ with $\partial N\times \{1\}$. Let's extend $f_0$ over the collar so that it is a concordance that first chops off a sphere from each fiber and then eliminates the choped off component by the erasing concordance. In particular the extended map $f_0$ has a new component $\sigma'$ of breaking folds of index $0$. Furthermore, the image of $\sigma'\sqcup \sigma$ bounds $S'\subset N'$ such that the coorientation of $\partial S'$ is outward directing. Hence, $\sigma$ and $\sigma'$ can be eliminated by the concordance of the first part of the proof. Thus, we can assume that $f_0$ has no folds of index $0$. 
\end{proof}

\section{Geometric consequences of the Harer stability theorem} \label{s:3}

Let $\Gamma_{g,k}$ denote the relative mapping class group of a surface $F_{g,k}$ of genus $g$ with $k$ boundary components.
There are several proofs of the Mumford conjecture, most of them use the Harer stability theorem: the homomorphism $\Gamma_{g,k}\to \Gamma_{g, k-1}$ induced by capping off a boundary component of $F_{g,k}$ and the homomorphism $\Gamma_{g,k}\to \Gamma_{g+1, k-2}$ induced by attaching a cylinder along two boundary components are homology isomorphisms  in dimensions $\ll g$.  In view of the Atiyah-Hirzebruch spectral sequence, the Harer stability theorem is equivalent to the assertion that the homomorphisms under consideration induce bordism isomorphisms of classifying spaces in dimensions $\ll g$.

\begin{example}\label{ex:3.1} By the Harer stability theorem, given a fiber bundle $f_0\co E_0\to N_0$ over a compact manifold of dimension $\ll g$ with fiber $F_{g,k}$ and a section $s$ over $\partial{N_0}$ together with a trivialization $\tau$ of the normal bundle of $s(\partial N_0)$ in $E_0|\partial N_0$, there are an oriented  bordism of $f_0$ to  $f_1\co E_1\to N_1$ and 
extensions of $s$ from $\partial N_0=\partial N_1$ over $N_1$ and $\tau$ from $s(\partial N_1)$ over $s(N_1)$.  Indeed, the initial data defines a map of pairs 
\[
(N_0, \partial N_0)\to (\mathop\mathrm{BDiff} F_{g,k}, \mathop\mathrm{BDiff} F_{g,k+1}), 
\] 
and the assertion is equivalent to the existence of a bordism to a map with image in $\mathop\mathrm{BDiff} F_{g, k+1}$. 
\end{example}

\begin{example}\label{ex:3.2} Let $f_0$ be a fiber bundle over $N_0$ with fiber $F_g$ of genus $g\gg \dim N_0$. Suppose that there exists a stabilization $f_1$ of $f_0$, see Example~\ref{ex:2.6}. Then $f_0$ is zero bordant if and only if $f_1$ is. Indeed, the assertion follows from the fact that  the two inclusions 
\[
     \mathop\mathrm{BDiff} F_{g}  \longleftarrow \mathop\mathrm{BDiff} F_{g, 1} \longrightarrow \mathop\mathrm{BDiff} F_{g+1}
\]
are bordism equivalences in stable range.
\end{example}

Eliashberg, Galatius and Mishachev gave~\cite{EGM} an important geometric interpretation of the Harer stability theorem. In this section we deduce two consequences of the Harer stability theorem (Proposition~\ref{p:6.3} and \ref{p:6.6}) for broken submersions using a singularity theory technique from \cite{EGM}. 

\begin{proposition}\label{p:6.3} Let $f_0$ be a broken submersion $M_0\to N_0$ to a closed simply connected manifold $N_0$. Then $f_0$ is bordant to a fiber bundle. 
\end{proposition}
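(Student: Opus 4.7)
The plan is to eliminate the broken components of $f_0$ in two waves. First I would remove the index $0$ broken components using the concordance of Proposition~\ref{prop:6.1}, which applies since $N_0$ is closed and simply connected and $d=2$ is even. Because broken components in dimension two have index in $\{0,1\}$, after this step only index $1$ broken components remain. Next, by iterating the local stabilizing concordance (Example~\ref{ex:2}) inside small discs of each regular component of $N_0$, I would raise every regular fiber's genus, and combining with Proposition~\ref{p:5.3c} would give a broken submersion, bordant to $f_0$, in which every regular fiber is connected of genus $g \gg \dim N_0$.

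The heart of the argument is an inductive removal of the remaining index $1$ broken components. Given one such $\sigma$, its image $\Sigma = f_0(\sigma)$ is a closed codimension-one submanifold of $N_0$ with trivial normal bundle, and in a tubular neighborhood $V \cong \Sigma \times (-\varepsilon,\varepsilon)$ the map $f_0$ encodes a family of standard fiberwise $1$-handle attachments transverse to $\Sigma$. On the higher-genus side of $\Sigma$ the bundle has fiber $F_{g+1}$; by cutting out the cocore disc of the handle, this side is naturally a bundle with fiber $F_{g,2}$ together with a framed section $s$ over $\Sigma$ (the attaching sphere of the handle). The plan is to feed this data into Example~\ref{ex:3.1} to extend $s$ and its framing across $\Sigma$ via an oriented bordism supported near $f_0^{-1}(V)$. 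Once the section is extended, capping off the two framed boundary circles in each fiber over $V$ by standard discs replaces the fiberwise handle by a trivial collar and so eliminates $\sigma$, reducing the number of broken components by one and enabling the induction.

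The main obstacle is the third step: ensuring that Harer stability, in the form of Example~\ref{ex:3.1}, delivers precisely the bordism required. One must identify the geometric data prescribed by the broken component---namely the coorientation of $\Sigma$, the attaching sphere, and its framing inside the nearby fibers---with the boundary section and normal trivialization used as input in Example~\ref{ex:3.1}. The simply-connectedness of $N_0$ is what lets us globalize choices of coorientation and framing over $\Sigma$ consistently, while the stable range $g \gg \dim N_0$ is exactly the hypothesis needed to invoke Harer stability on a bundle with base of dimension $\dim \Sigma = \dim N_0 - 1$. After successfully removing a single $\sigma$, iterating finitely many times exhausts all broken components and produces a fiber bundle bordant to $f_0$.
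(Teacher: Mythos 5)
Your overall strategy---reduce to connected, high-genus fibers with no index~$0$ folds using Propositions~\ref{p:5.3c} and~\ref{prop:6.1}, then eliminate each remaining index~$1$ broken component by producing a framed section via Harer stability and cancelling it with a (de)stabilizing concordance---is the same as the paper's. However, the step where you invoke Harer stability is treated too loosely and, as written, does not go through. You propose to apply Example~\ref{ex:3.1} to ``the higher-genus side'' of $\Sigma = f_0(\sigma)$, treating that region as a fiber bundle with fiber $F_{g,2}$ over a manifold with boundary $\Sigma$. But in general the closed domain $S$ bounded by $f_0(\sigma)$ contains the images of the \emph{other} broken components of $f_0$, so $f_0$ over $S$ is not a fiber bundle, and Example~\ref{ex:3.1} cannot be applied to it directly. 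Saying that one ``iterates finitely many times'' does not resolve this, since eliminating $\sigma$ already requires a section over a region crossed by the remaining fold loci---a chicken-and-egg problem. The paper isolates exactly this difficulty in Lemma~\ref{l:6.4}, which constructs the framed section by induction over the multiplicity stratification $S = \cup S_j$ of $f_0(\Sigma f_0)$ in $S$: over each stratum $S_k$, after removing a tubular neighborhood of $\Sigma f_0$, the map $f_0$ \emph{is} a fiber bundle with fiber $F_{g,2k}$, and Example~\ref{ex:3.1} is applied there, descending through the strata. This lemma is the technical heart of the proposition, and it is the piece your sketch is missing.

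A secondary imprecision: the section the paper extends is the two-sheeted point-section given near $\sigma$ by the axis $\sigma\times\{0\}\times\{0\}\times\R$ of the fold normal form, so its natural domain is the double $\tilde S = S\cup_{\partial S}S$, not a single copy of $S$, and the data required by Example~\ref{ex:3.1} is a point-section with a rank-$2$ normal framing, not a cocore circle over $\Sigma$. Also, the bordism supplied by Example~\ref{ex:3.1} is not ``supported near $f_0^{-1}(V)$'' for a collar $V$ of $\Sigma$---it modifies $f_0$ over all of $S$ (this is why the paper phrases it as ``after possibly modifying $f_0$ by an oriented bordism''). Once the framed double section is in hand, the cancellation is a concordance with support in a tubular neighborhood $h(\tilde S)\times\R^2$, given fiberwise by the Morse function $u$ of Example~\ref{ex:2.6}; your ``cap off the two boundary circles'' step should be replaced by this concordance.
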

\begin{proof} In view of Propositions~\ref{p:5.3c} and \ref{prop:6.1}, we may assume that the fiber of $f_0$ over each regular point is a connected surface of genus $\gg \dim N_0$ and that $f_0$ has no folds of index $0$. 
Let $\sigma$ denote a path component of breaking folds $\Sigma f_0$ of $f_0$. Since $f_0(\sigma)$ is cooriented and $N_0$ is simply connected, the Mayer-Vietoris 
\begin{figure}[h]
  \centering
\includegraphics[height=1.5in]{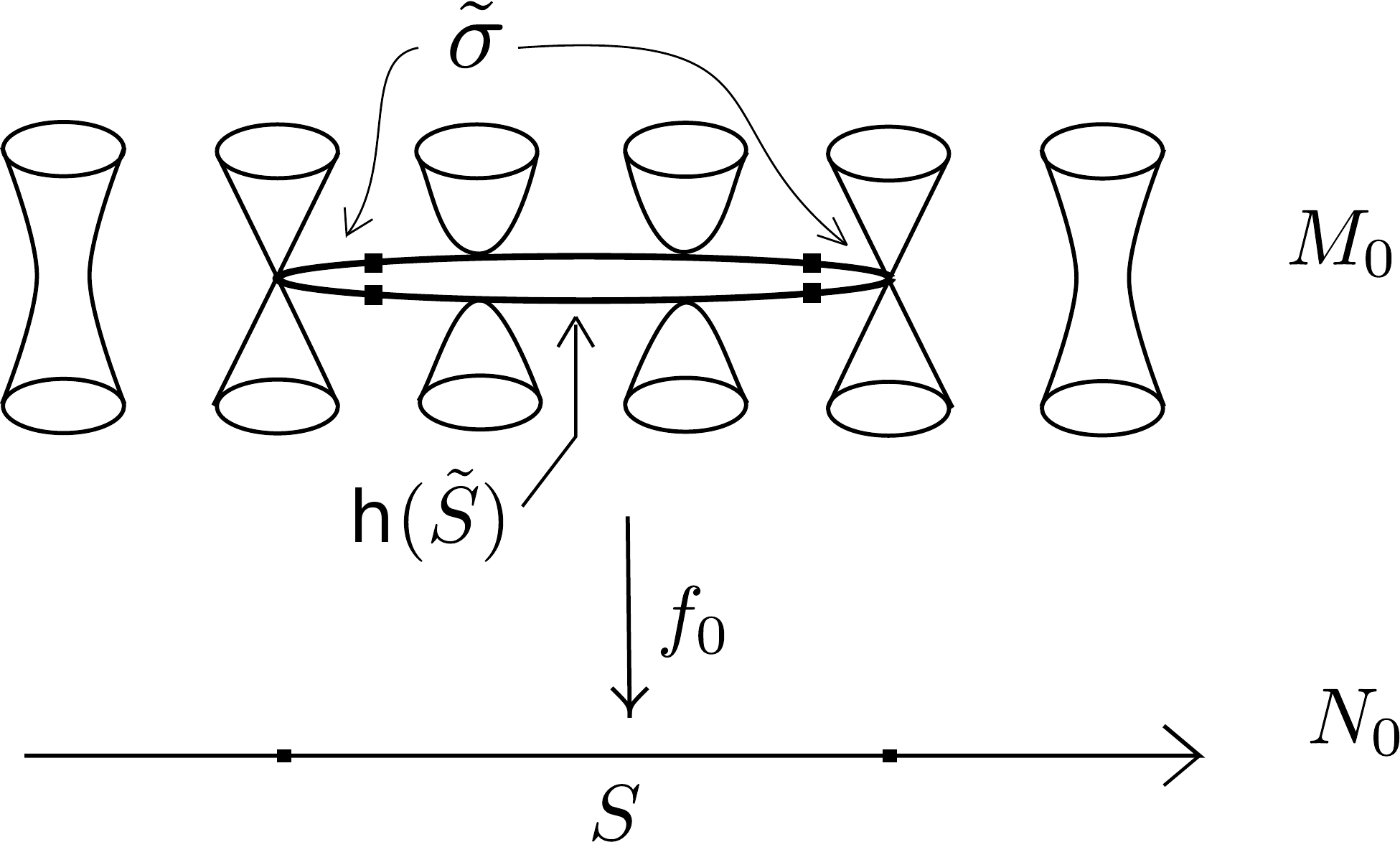}
\caption{The image $g(\tilde{S})$.}
\label{fig:4c}
\end{figure}
sequence implies that the complement to $f_0(\sigma)$ consists of two components. Let $S$ denote the closed submanifold in $N_0$ bounded by $f_0(\sigma)$ such that the coorientation of the fold values $\partial S$ is inward directed, see Figure~\ref{fig:4c}. 
Recall that  a neighborhood of $\sigma $ is identified with $\sigma\times \R^3$ and near $\sigma$ the map $f_0$ is given by $\id_\sigma\times m$ where $m=-x_1^2-x_2^2+x_3^2$. Let $\tilde\sigma$ be the submanifold $\sigma\times \{0\}\times \{0\}\times \R$ in the neighborhood of $\sigma$. Note that the coordinates $x_1$ and $x_2$ trivialize the normal bundle of $\tilde\sigma$. 
Let $\tilde{S}=S\cup_{\partial S}S$ be the double of $S$. A neighborhood $\tilde\sigma'$ of $\partial S$ in $\tilde{S}$ is canonically diffeomorphic to $\tilde\sigma'$. Given a map $h$ of $\tilde S$, the restrictions of $h$ to the two copies of $S$ are denoted by $h_+$ and $h_-$. 

In view of Lemma~\ref{l:6.4} below, we may assume that the canonical diffeomorphism $\tilde\sigma'\to \tilde\sigma$ extends to an inclusion $h\co \tilde{S}\subset M_0$ such that $h_+$ and $h_-$ are right inverses of $f_0$, and the trivialization of the normal bundle of $\tilde\sigma$ extends to that over $h(\tilde{S})$.

The promised concordance will have support in a small neighborhood $h(\tilde S)\times \R^2$ of $h(\tilde S)$; hence, we may assume that the complement is empty.  Let $S'$ be a copy of $S$ in $N_0\times [0,1]$ such that $S'$ meets the boundary of $N_0\times [0,1]$ transversally along $\partial S\times \{0\}$ and the projection of the interior of $S'$ to $N_0$ is a diffeomorphism onto the interior of $S$. Over a neighborhood $S'\times (-1,1)$ of $S'$ in $N_0\times [0,1]$, the desired concordance is $\id_{S'}\times u$, where $u$ is the Morse function of Example~\ref{ex:2.6} (see Figure~\ref{fig:11}), while over the complement  to $S'$ in $N_0$ the concordance is trivial.   
\end{proof}

\begin{lemma}\label{l:6.4} After possibly modifying $f_0$ by an oriented bordism, we may assume that there is an embedding $h\colon \tilde{S}\to M_0$ with trivialized normal bundle extending the canonical diffeomorphism $\tilde\sigma'\to \tilde\sigma$ and the trivialization of its normal bundle respectively such that $h_{-}$ and $h_+$ are right inverses to $f_0$. 
\end{lemma}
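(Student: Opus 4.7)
The plan is to produce the two pieces $h_\pm\co S\to M_0$ as sections of $f_0$ over $S$ by extending local sections that are already present in the breaking-component structure of $\sigma$, using the Harer stability theorem in the packaged form of Example~\ref{ex:3.1} to perform the extensions after an oriented bordism of $f_0$.

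First I would unpack the local picture. Since the coorientation of $\partial S = f_0(\sigma)$ is inward directed, a one-sided collar of $\partial S$ in $S$ lies on the region $\{m\ge 0\}$ of the local model $f_0=\id_\sigma\times m$, where $m=-x_1^2-x_2^2+x_3^2$. For $c>0$ small, the level set $\{m=c\}\subset\R^3$ consists of two disjoint disks, distinguished by the sign of $x_3$, and the submanifold $\tilde\sigma=\sigma\times\{0\}\times\{0\}\times\R$ selects the center of each. This gives a pair of disjoint local sections of $f_0$ over a collar of $\partial S$, and the coordinates $(x_1,x_2)$ simultaneously trivialize their normal bundles. Using Propositions~\ref{p:5.3c} and~\ref{prop:6.1}, I also arrange that the regular fibers of $f_0$ are connected closed surfaces of genus $g\gg\dim N_0$ and that $f_0$ has no folds of index $0$; shrinking $S$ slightly while keeping $\sigma$ as its boundary fold, I may further assume that the interior of $S$ contains no fold values of $f_0$, so that $f_0\co f_0^{-1}(\mathrm{int}\,S)\to \mathrm{int}\,S$ is a genuine $F_g$-bundle.

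Next I would apply Example~\ref{ex:3.1} twice. In the first application the fiber is $F_g=F_{g,0}$ and the section over $\partial S$ is one of the two local sections coming from $\tilde\sigma$, together with its $(x_1,x_2)$-trivialization. Since $g\gg\dim N_0$, Harer stability furnishes an oriented bordism of the $F_g$-bundle rel $\partial S$ to a bundle over which the section extends to all of $S$, giving a section $h_+$ with trivialized normal bundle. In the second application, I remove an open tubular neighborhood of $h_+(S)$ to turn the fiber into $F_{g,1}$, and use the other half-component of $\tilde\sigma$ as the section over $\partial S$---disjoint from $h_+$ there by construction. A second application of Example~\ref{ex:3.1}, again using $g\gg\dim N_0$, produces after a further oriented bordism the second section $h_-$ with trivialized normal bundle. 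Reinstating the removed disc subbundles gives a single oriented bordism of $f_0$ supported in the interior of $f_0^{-1}(S)$.

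Finally, the two sections assemble into $h=h_+\cup h_-\co \tilde S=S\cup_{\partial S}S\to M_0$; smoothness and matching of normal-bundle trivializations across $\partial S$ follow because both $h_\pm$ agree near $\partial S$ with the two half-components of $\tilde\sigma$ and both trivializations restrict there to the $(x_1,x_2)$-trivialization of the normal bundle of $\tilde\sigma$. The map $h$ is an embedding because $h_\pm$ have disjoint images, and $h_\pm$ are right inverses of $f_0$ as sections of $f_0|f_0^{-1}(S)$. The main difficulty lies in the bookkeeping between the two applications of Harer stability: keeping the two sections disjoint throughout both bordisms and ensuring the rel-boundary bordisms of the $F_{g,k}$-bundle over $S$ glue to a global oriented bordism of $f_0$. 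Both points are controlled once $g\gg\dim N_0$ and should present no genuine obstacle.
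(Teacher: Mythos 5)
The proposal has a genuine gap: it assumes away the main difficulty of the lemma. You write that, after shrinking $S$, you ``may further assume that the interior of $S$ contains no fold values of $f_0$.'' This is not available. The domain $S$ is a codimension-$0$ submanifold of $N_0$ determined by the chosen fold component $\sigma$ (it is the side of $f_0(\sigma)$ toward which the coorientation points inward), and the other components of the breaking fold locus $\Sigma f_0$ will in general have images running all through the interior of $S$. Shrinking $S$ does not remove them, and shrinking is not even permitted since $\partial S$ must remain exactly $f_0(\sigma)$ for the concordance constructed in Proposition~\ref{p:6.3} to eliminate $\sigma$. Propositions~\ref{p:5.3c} and~\ref{prop:6.1} make the fibers connected of large genus and remove folds of index $0$, but all the remaining index-$1$ fold components persist and their images intersect $S$. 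Consequently $f_0|f_0^{-1}(\mathrm{int}\,S)$ is not an $F_g$-bundle, and a direct two-step application of Example~\ref{ex:3.1} over $S$ does not get off the ground.

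This is precisely what the paper's proof is built to handle: it puts $f_0|\Sigma f_0$ in general position, stratifies $S=\cup S_j$ by the multiplicity $j$ of the immersed fold-value hypersurfaces inside $S$, and runs a downward induction on $j$. Over the top stratum $S_k$, after deleting a tubular neighborhood $D$ of $\Sigma f_0$ from $M_0$, the map becomes a bundle with fiber $F_{g,2k}$; Example~\ref{ex:3.1} is applied there rel the already-constructed data over the deeper strata, and the resulting bordism of the $F_{g,2k}$-bundle is thickened normal to the $k$ intersecting fold sheets (the $\mathbf{b}\co\mathbf{E}\to B\times D^k$ step) and glued into a bordism of $f_0$. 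None of this appears in your argument. You would also need to address the issue your proposal flags at the end (keeping the two sections disjoint through both Harer bordisms), but that is secondary; the essential missing idea is the multiplicity stratification and induction, without which the bundle hypothesis needed to invoke Example~\ref{ex:3.1} simply fails over $S$.
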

\begin{proof} 
We may assume that $f_0|\Sigma f_0$ is a general position immersion. Let $S_j$ denote the submanifold in $S$ of points of $f_0(\Sigma f_0)$ of multiplicity $j$ and $S_0$ is the complement to $\cup S_i$ in $S$. Then $S=\cup S_j$. Suppose that $h_-, h_+$ and trivializations have been constructed over a neighborhood of $S_j$ for all $j>k$. Let $D$ be an open tubular neighborhood of $\Sigma f_0$ in $M_0$. 
\begin{figure}[h]
  \centering
\includegraphics[height=1.5in]{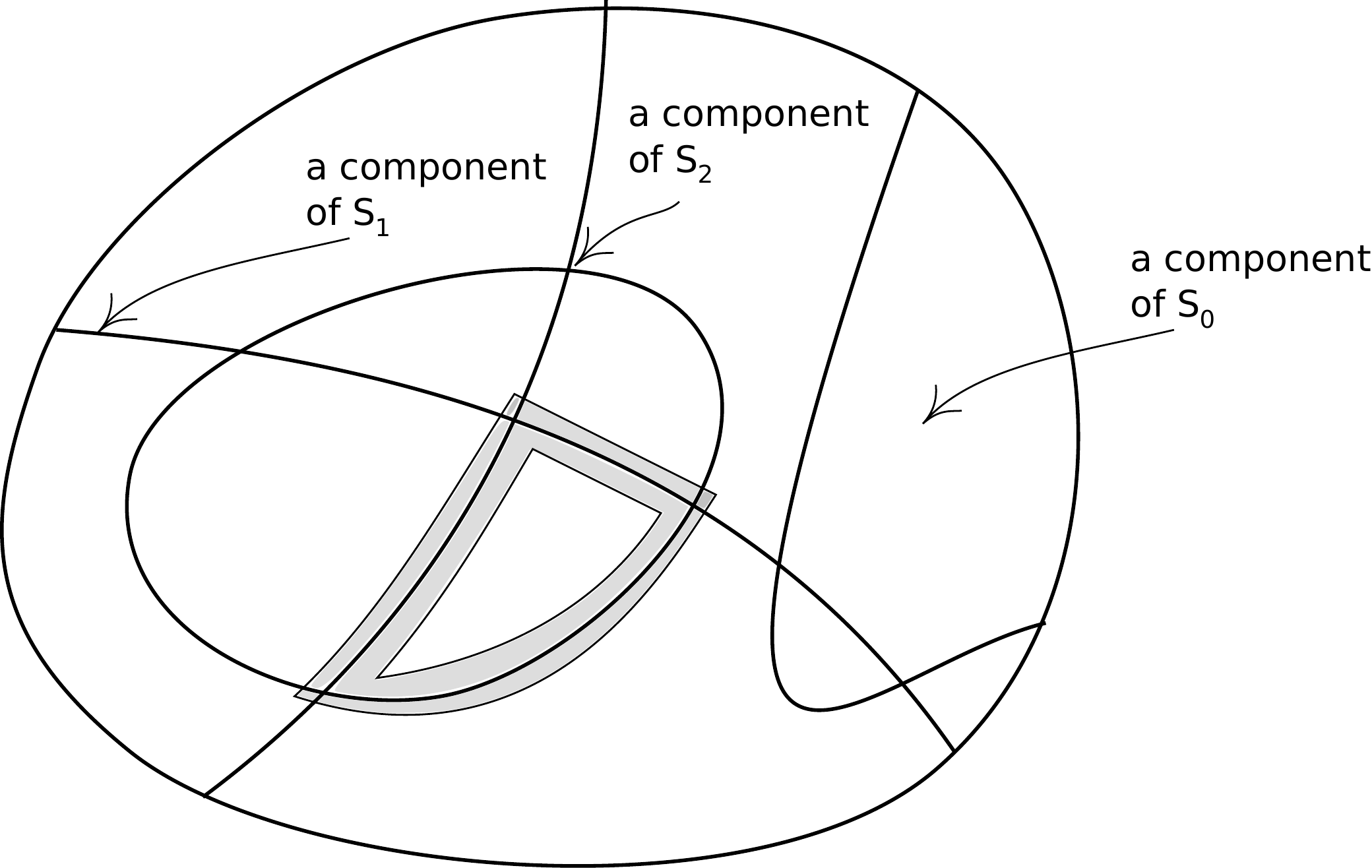}
\caption{De composition of $S$.}
\label{fig:12}
\end{figure}
Then over $B_0=S_k$ the map $b_0$ given by $f_0|M_0\setminus D$ is a fiber bundle with fiber $F_{g, 2k}$ for some $g$. By Example~\ref{ex:3.1}, there is a bordism $b\co E\to B$ of $b_0$ to $b_1\co E_1\to B_1$ such that $h_-, h_+$ and trivializations extend over $B_1$. The bordism $b$ can be essentially uniquely thickened to a bordism $\mathbf{b}\co\mathbf{E}\to \mathbf{B}=B\times D^k$ of the restriction of $f_0$ over a disc neighborhood of $B_0$ so that $\mathbf{b}$ is a broken submersion with breaking fold values $\sqcup B\times D^{k-1}_i$ where $D^{k-1}_i$ ranges over all $k$ coordinate hyperdiscs in $D^k$. 
Let $N$ be the union of $N_0\times I$ and $\mathbf{B}$ in which the top submanifold $(B_0\times D^k)\times \{1\}$ is identified with $B_0\times D^k\subset \mathbf{B}$. Let $M$ be a similar union of $M_0\times I$ and $\mathbf{E}$. Then after smoothing corners we obtain a bordism $f= f_0\times \id_I\cup \mathbf{b}$ of $f_0$ to $f_1$  such that $h_-, h_+$ and trivializations extend over a neighborhood of $S_k(f_1)$.
Thus, by induction, we get a desired extension.
\end{proof}

\begin{remark}\label{r:3.5} The above construction works in the case of $N_0=S^1$ as well. Indeed, choose $S$ to be the interval in $N_0$ over which the fibers of $f_0$ are of maximal Euler characteristic. Then the above bordism eliminates the two folds in $f^{-1}_0(\partial S)$. Continuing by induction we end up with a submersion. Note that here the bordism of $f_0$ is actually a concordance.  
\end{remark}

\begin{figure}
\includegraphics[height=1.5in]{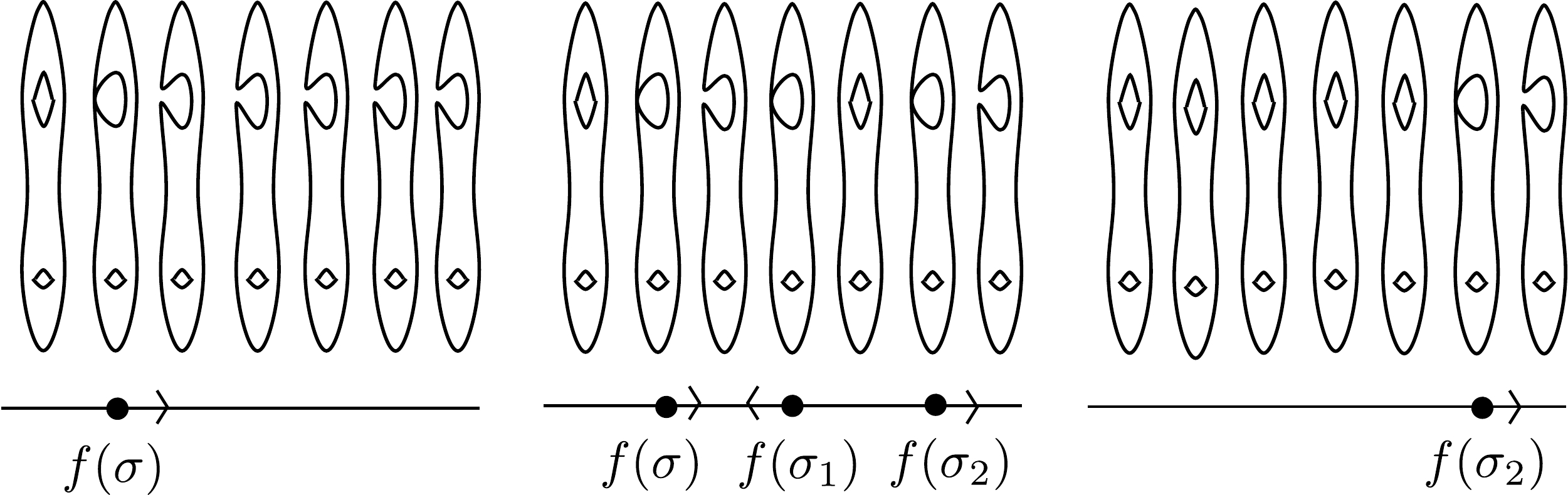}
\caption{Trading singularities.}
\label{fig:6}
\end{figure}

\begin{proposition}\label{p:6.6}
Let $f_0$ be a broken submersion $M_0\to N_0$ to a compact  simply connected manifold $N_0$. Suppose that over $\partial N_0$  the map $f_0$ is a fiber bundle with fiber $F_g$ of genus $g\gg \dim N_0$. 
Then $f_0|\partial N_0$ is zero bordant in the class of fiber bundles.
\end{proposition}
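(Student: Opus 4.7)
The plan is to apply the proof strategy of Proposition~\ref{p:6.3} relative to $\partial N_0$. Since by hypothesis the broken components of $f_0$ are disjoint from $\partial N_0$, all the modifications used in the arguments of Propositions~\ref{p:5.3c}, \ref{prop:6.1}, and \ref{p:6.3} can be arranged to have support in the interior of $N_0$, so the restriction $f_0|\partial N_0$ is preserved as a fiber bundle with fiber $F_g$ throughout the construction.

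First, I would invoke Propositions~\ref{p:5.3c} and \ref{prop:6.1} to reduce to the case where $f_0$ has connected regular fibers of genus $\gg \dim N_0$ and no folds of index~$0$. The stabilizing concordances of Example~\ref{ex:2} and the index-$0$ elimination concordance of Proposition~\ref{prop:6.1} both have support in neighborhoods of existing folds, which lie in the interior of $N_0$; over $\partial N_0$ the map is already a fiber bundle of the required large genus, so nothing needs to be done near the boundary. Then, one component at a time, I would eliminate each remaining breaking fold $\sigma$ exactly as in the proof of Proposition~\ref{p:6.3}: the submanifold $S \subset N_0$ bounded by $f_0(\sigma)$ lies in the interior of $N_0$, and the embedding $h\co \tilde{S} \to M_0$ produced by Lemma~\ref{l:6.4}, together with the concordance built from the Morse function $u$ of Example~\ref{ex:2.6} applied over a copy $S' \subset N_0 \times [0,1]$ of $S$, all have support in a neighborhood of $S$, disjoint from $\partial N_0 \times [0,1]$.

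Carrying this out for every breaking fold component, we obtain a bordism from $f_0$ to a fiber bundle $f_1\co E_1 \to N_1$ with $\partial N_1 = \partial N_0$ and $f_1|\partial N_1 = f_0|\partial N_0$. Regarding $f_1$ as a fiber bundle over a manifold with boundary $\partial N_0$, this exhibits $f_1$ as the desired fiber bundle filling of $f_0|\partial N_0$, which is the required zero bordism in the class of fiber bundles. The main obstacle to verify is that the inductive application of Example~\ref{ex:3.1} within the proof of Lemma~\ref{l:6.4} respects the boundary structure, but this is automatic here because the inductive extension over the fold image strata $S_k$ takes place entirely in the interior of $N_0$, where $f_0|\partial N_0$ is never touched.
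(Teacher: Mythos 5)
You rely on the claim that, just as in Proposition~\ref{p:6.3}, the closed domain $S \subset N_0$ bounded by $f_0(\sigma)$ lies in the interior of $N_0$, so that the fold-elimination concordance has support away from $\partial N_0$. This is where the argument breaks down. In Proposition~\ref{p:6.3} the manifold $N_0$ is closed, and the complement of $f_0(\sigma)$ has two pieces; one is free to take $S$ to be the piece with the \emph{inward}-directing coorientation of $\partial S$, which is exactly the choice the elimination concordance requires. When $N_0$ has nonempty boundary, the cooriented hypersurface $f_0(\sigma)$ again separates $N_0$, but the side whose coorientation is inward-directing may well be the side containing $\partial N_0$. In that case there is no closed domain $S$ in the interior of $N_0$ with the required coorientation, and applying the Proposition~\ref{p:6.3} concordance would force its support to reach $\partial N_0\times[0,1]$, destroying the fiber-bundle structure of $f_0|\partial N_0$. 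Your proposal silently assumes the favorable coorientation for every $\sigma$, which is not guaranteed.

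The paper's proof handles precisely this obstruction with a ``trading'' step: when the interior domain bounded by $f_0(\sigma)$ has the outward-directing coorientation, one first modifies $f_0$ over a collar $\partial N_0\times[0,1]$ by a stabilize-then-destabilize concordance (Example~\ref{ex:2.6}), producing two new breaking fold components. One of these has the coorientation needed so that, together with $\sigma$, it bounds an interior domain with inward-directing coorientation and can be eliminated as in Proposition~\ref{p:6.3}. The net effect is that $\sigma$ is traded for a breaking fold parallel to $\partial N_0$. After performing every trade, $f_0$ is a stabilizing concordance over a collar of $\partial N_0$ and a genuine fiber bundle over the complement; the conclusion then comes from Example~\ref{ex:3.2} (Harer stability), which says $f_0|\partial N_0$ is zero bordant if and only if its stabilization is. Your proposal omits both the trading step and the final appeal to Example~\ref{ex:3.2}, so the conclusion that one obtains a fiber bundle restricting to $f_0|\partial N_0$ on the nose does not follow.
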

\begin{proof}
In view of Propositions~\ref{p:5.3c} and \ref{prop:6.1}, we may assume that the fiber of $f_0$ over each regular point is a connected surface of genus $\gg \dim N_0$ and that $f_0$ has no folds of index $0$. 
 Let $\sigma$ be a component of folds, and $S$ a closed domain bounded by $f_0(\sigma)$. 
Assume that the coorientation of $\partial S$ is outward directing; otherwise $\sigma$ can be eliminated as above. We may assume that a neighborhood of $\partial N_0$ is identified with $\partial N_0\times [0,2)$ and over $U=\partial N_0\times [0,1]$ the broken submersion $f_0$ is the trivial concordance of $f_0|\partial N_0$. Modify $f_0$
over $U$  so that it is a concordance that first stabilizes the fibers and then destabilizes them back, see Example~\ref{ex:2.6}. Then $f_0$ has two new components of breaking folds. One of these two components can be eliminated with $\sigma$ by the concordance as above. Thus the component $\sigma$ can be ``traded" for a new component of breaking folds parallel to $\partial N_0$. Consequently, we may assume that $f_0$ only has breaking folds parallel to $\partial N_0$. 
\begin{figure}
\centering
\centering
\includegraphics[height=1.5in]{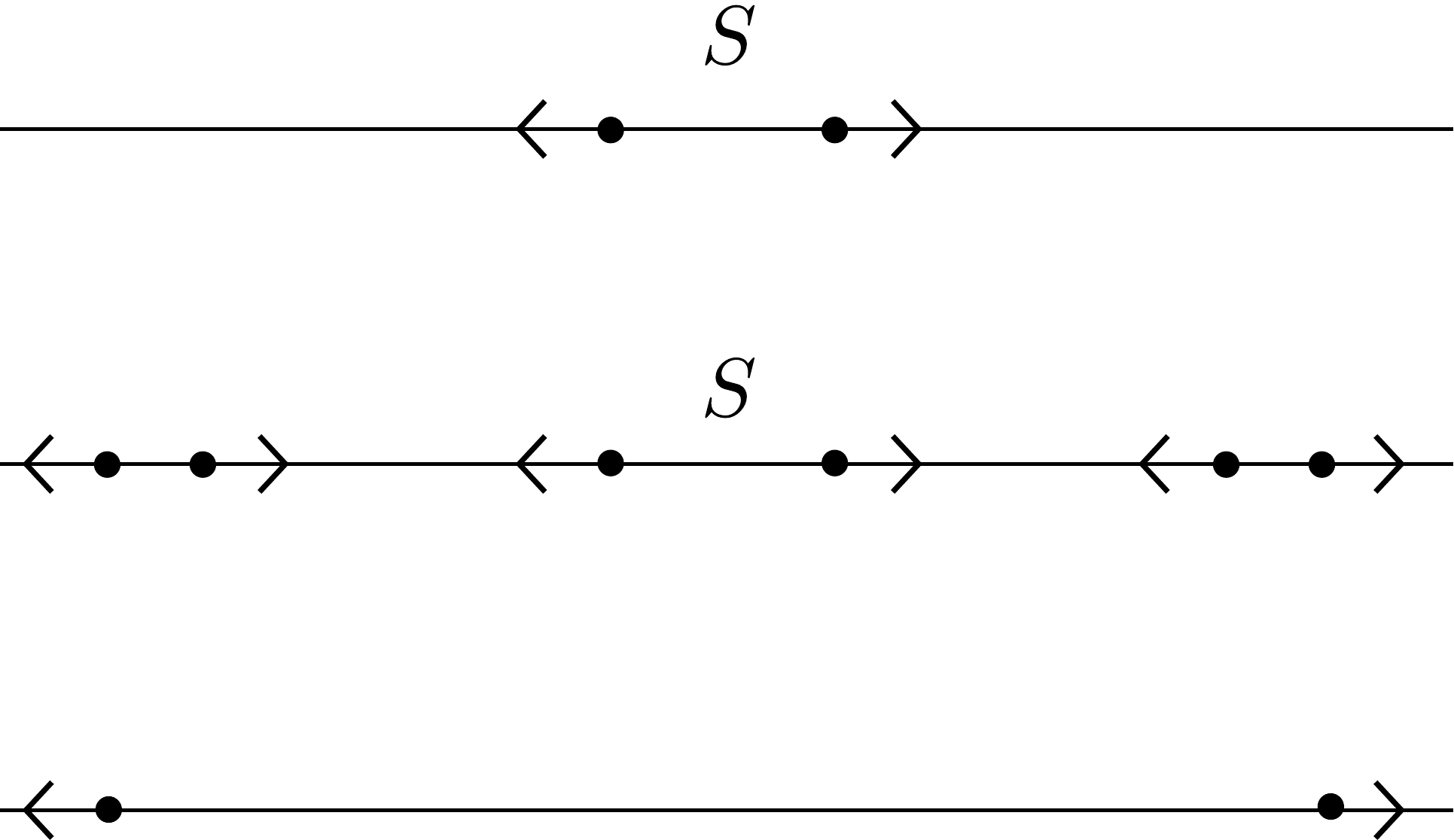}
\caption{The component $\sigma$ can be ``traded" for a new component of breaking folds parallel to $\partial N_0$.}
\label{fig:7}
\end{figure}

In other words, the map $f_0$ over a collar neighborhood of $\partial N_0$ is a concordance that stabilizes the fibers, and over the complement to the collar neighborhood of $\partial N_0$  it is a fiber bundle. It remains to apply  Example~\ref{ex:3.2}. 
\end{proof}

\section{The weak b-principle}\label{s:5a}

A collection $\mathcal{C}$ of smooth maps $f\co M\to N$ with fixed $\dim M-\dim N=d$ is said to be a \emph{class} of maps of dimension $d$ if the induced map  $h^*g$ in the pullback diagram 
\[
\begin{CD}
       M' @>>> M \\
       @Vh^*gVV @VgVV \\
       N' @>h>> N. 
\end{CD}
\] 
is in $\mathcal{C}$ for every map $g\in \mathcal{C}$ and every map $h$ transverse to $g$.

\begin{example} If $g\co M\to N$ is a submersion, then for every smooth map $g\co N'\to N$, the induced map  $h^*g$ 
is a submersion as well. If $g$ is an immersion, then the induced map $h^*g$ is an immersion as well provided that $h$ is \emph{transverse} to $g$, i.e., provided that for each $x\in N$,  $x'\in N'$ and $y\in M$ such that $h(x')=x=g(y)$, we have
\[
   \mathop\mathrm{Im}(d_{x'}h)\ \oplus\  \mathop\mathrm{Im}(d_yg)\ \simeq \ T_xN.
\] 
Thus, both submersions and immersions of dimension $d$ form classes of maps. More generally, solutions to any open stable differential relation $\mathcal{R}$ form a class of maps \cite{Sa}.  The transversality condition is clearly important here:  if a smooth map $h$ is not transverse to a smooth map $g$, then the pullback space $M'$ may not admit a manifold structure. 
\end{example}

An appropriate quotient space of all proper maps in a collection $\mathcal{C}$ is called the \emph{moduli space} for $\mathcal{C}$. 
Namely, recall that the \emph{opening} of a subset $X$ of a manifold $V$ is an arbitrarily small but non-specified open neighborhood $\mathop\mathrm{Op}(X)$ of $X$ in $V$. Consider the affine subspace \[ \{x_1+\cdots + x_{m+1}=1\}  \subset \R^{m+1}. \] It contains the standard simplex $\Delta^m$ bounded by all additional conditions $0\le x_i\le 1$. Let $\Delta^n_e$ denote the opening of $\Delta^m$ in the considered affine subspace. Then every morphism $\delta$ in the simplicial category extends linearly to a map $\tilde\delta\co \Delta^m_e\to \Delta^n_e$. 
Let $X_m$ denote the subset of $\mathcal{C}$ of proper maps to $\Delta^m_e$ transverse to all extended face maps. Then $X_\bullet$ is a simplicial set with structure maps $X(\delta)$ given by the pullbacks $f\mapsto \tilde\delta^*f$. 


The (simplicial model of the) \emph{moduli space} $\M$ for $\mathcal{C}$ is the semi-simplicial geometric realization of $X_\bullet$. We say that $\mathcal{C}$ satisfies the \emph{sheaf property} if $f\co M\to N$ belongs to $\mathcal{C}$ whenever each $f|f^{-1}U_i$ is in $\mathcal{C}$ for a covering $\{U_i\}$ of $N$. If $f$ satisfies the sheaf property, then 
the sets $\Omega_*\M$ and $[N, \M]$ are isomorphic to the sets of bordism classes and concordance classes of proper maps in $\mathcal{C}$ to $N$ respectively. 



\begin{figure}[h]
 \centering
\includegraphics[height=1.5in]{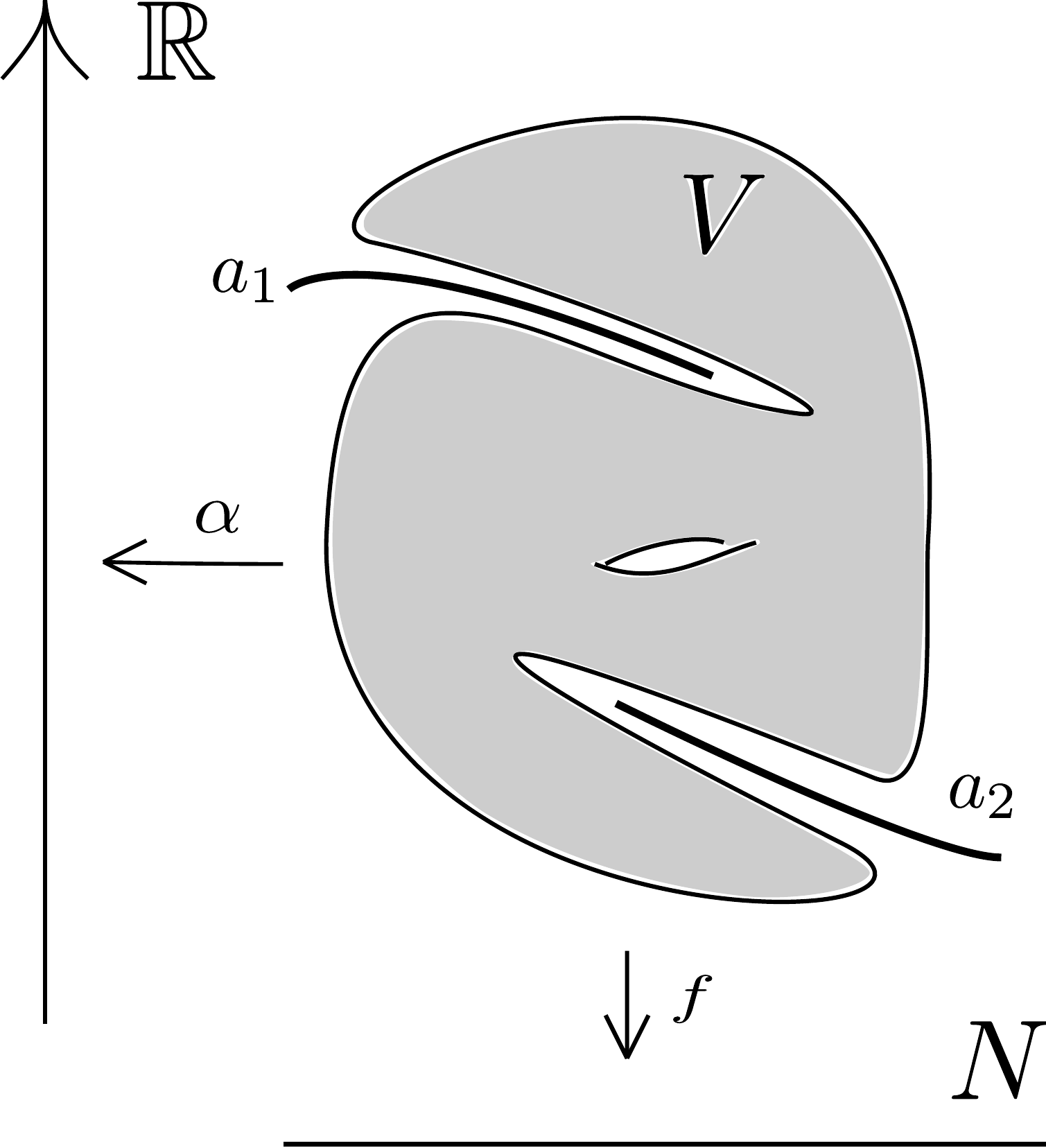}
\caption{A map $(f, \alpha)$ in the collection $h\mathcal{C}^1$.}
\label{fig:2}
\end{figure}

 We say that a class $\mathcal{C}$ is \emph{monoidal} if the map of the empty set to a point is a map in $\mathcal{C}$ and the class $\mathcal{C}$ is closed with respect to taking disjoint unions of maps, i.e., if $f_1: M_1\to N$ and $f_2: M_2 \to N$ are maps in $\mathcal{C}$, then 
\[
f_1\sqcup f_2: M_1\sqcup M_2 \longrightarrow N
\]
 is also a map in $\mathcal{C}$. For a monoidal class $\mathcal{C}$ the space $\mathcal{M}$ is  an H-space with a coherent operation (i.e., the first term of a $\Gamma$-space). We will recall the construction of its classifying space $\mathcal{M}^1$ and an approximation of $\mathcal{M}^1$ by a space $h\mathcal{M}^1$ of a relatively simpler homotopy type,  for details see \cite{Sa}, \cite{Sa1}. 
 
Let $\mathcal{C}^1$ be the derived collection (not a class) of proper maps $(f, \alpha)\co V\to N\times \R$ with $f\in \mathcal{C}$ such that every regular fiber of $(f, \alpha)$ is null-cobordant; and let $\mathcal{C}^1\subset h\mathcal{C}^1$ be a subcollection of pairs with $\alpha\circ f^{-1}(x)\ne \R$ for all $x\in N$. The  spaces $\M^1$ and $h\M^1$ are the geometric realizations of simplicial sets of maps $(f, \alpha)$ to $\Delta^m_e\times \R$ such that $f$ is transverse to all extended face maps and $(f, \alpha)$ is in $\mathcal{C}^1$ and $h\mathcal{C}^1$ respectively.

\begin{definition} The \emph{weak b-principle} for $\mathcal{C}$ is said to hold true  if the inclusion $\M^1\to h\M^1$ is a homotopy equivalence.
\end{definition}
 
\begin{theorem}[Sadykov, \cite{Sa1}]\label{th:2}  Let $\mathcal{C}$ be a monoidal class of maps satisfying the sheaf property. Suppose that every breaking concordance of every map in $h\mathcal{C}^1$ is itself in $h\mathcal{C}^1$. Then the weak b-principle for $\mathcal{C}$ holds true.
\end{theorem}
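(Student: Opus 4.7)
The plan is to prove that $\pi_n(h\M^1, \M^1) = 0$ for all $n \ge 0$; since $\M^1 \hookrightarrow h\M^1$ is an inclusion of geometric realizations of simplicial sets, this suffices for the required weak equivalence. An element of $\pi_n(h\M^1, \M^1)$ is represented by a proper map $(f,\alpha)\co V\to \Delta^n_e \times \R$ in $h\mathcal{C}^1$, transverse to every extended face map, that already lies in $\mathcal{C}^1$ on an opening of $\partial\Delta^n_e$. The task therefore is to construct a concordance within $h\mathcal{C}^1$, trivial on a neighborhood of $\partial\Delta^n_e$, whose endpoint lies in $\mathcal{C}^1$.

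The obstruction is concentrated on the closed subset
\[
B \;=\; \{\,x\in \Delta^n_e : \alpha(f^{-1}(x)) = \R\,\},
\]
which is disjoint from $\partial\Delta^n_e$. The key geometric input is the defining condition of $h\mathcal{C}^1$: every regular fiber of $(f,\alpha)$ is null-cobordant. Such a null-cobordism supplies exactly the compact manifold $W$ required by the standard model for breaking concordances of \S\ref{s:3c}. For any $x\in B$ and any regular value $t$ of $\alpha$ on $f^{-1}(x)$, picking a regular point $p$ of $(f,\alpha)$ above $(x,t)$ and applying the standard model in a small tubular neighborhood of $p$ yields a breaking concordance whose effect on nearby fibers of $f$ is to introduce a new Morse critical point that splits off a compact, $\alpha$-bounded piece. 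By the standing hypothesis on $h\mathcal{C}^1$, the concordance stays in $h\mathcal{C}^1$ throughout.

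A stepwise procedure ordered by $\alpha$-levels, together with the properness of $(f,\alpha)$ for local finiteness and the transversality to face maps for compatibility with the simplicial structure, composes finitely many such breaking concordances into a single concordance reducing $B$ to the empty set. Every individual insertion has arbitrarily small support which can be placed away from $\partial\Delta^n_e$, so the total concordance is trivial on an opening of $\partial\Delta^n_e$, and its endpoint lies in $\mathcal{C}^1$ by construction. This furnishes the required nullhomotopy and establishes the weak b-principle.

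The main obstacle lies in the coherent globalization of this fibrewise procedure: each breaking concordance is a purely local modification at a single regular point, whereas $B$ may vary subtly with $x$, and the regular points at which $\alpha$ is surjective on $f^{-1}(x)$ may also move. Organizing the insertions into a genuine concordance over $\Delta^n_e$ whose endpoint lies in $\mathcal{C}^1$ \emph{everywhere} is where the sheaf property of $\mathcal{C}$ and the closure of $h\mathcal{C}^1$ under breaking concordances do real work; the latter hypothesis is precisely what allows one to freely apply breaking concordances without leaving $h\M^1$, while the former lets one patch the local modifications into a global concordance.
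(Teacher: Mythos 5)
The paper does not contain its own proof of Theorem~\ref{th:2}: the result is stated with the attribution ``Sadykov, \cite{Sa1}'' and used as a black box, with no proof following the statement. There is therefore no in-paper argument to compare your sketch against, so I will assess it on its own terms.

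Your overall architecture is sound: reducing the weak b-principle to the vanishing of $\pi_n(h\M^1, \M^1)$, identifying the obstruction set $B=\{x:\alpha(f^{-1}(x))=\R\}$ (which is indeed closed, by properness of $(f,\alpha)$), and recognizing that the null-cobordism of regular fibers supplies the compact manifold $W$ needed in the standard model for breaking concordances --- these are all the right ingredients, and they explain why the two hypotheses of the theorem (the sheaf property and closure of $h\mathcal{C}^1$ under breaking concordances) appear where they do.

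The central gap, however, is in what you claim a single breaking concordance accomplishes. A breaking concordance, as defined in \S\ref{s:3c}, has \emph{support in a small neighborhood $U$ of a single regular point $p$} and is trivial elsewhere. A modification with support in such a $U$ cannot by itself ``split off a compact, $\alpha$-bounded piece'' of the whole fiber $f^{-1}(x)$: the portions of $f^{-1}(x)$ outside $U$ are untouched, so if they escape to $\alpha\to\pm\infty$ before the concordance they still do afterwards. To actually cut the fiber off at a level $\alpha=t$, one must work along the \emph{entire} regular level $(f,\alpha)^{-1}(x,t)$, which is a $(d-1)$-manifold, not a point. The null-cobordism hypothesis tells you this level bounds a compact $W$, but realizing that null-cobordism as a concordance of maps is not the same as applying a point-supported breaking move, and how the two are reconciled is precisely the content that needs to be supplied. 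Relatedly, the assertion that ``finitely many'' breaking concordances suffice is unjustified: $V$ is typically non-compact (each fiber over $B$ is non-compact), and nothing in the sketch bounds the number of insertions or controls their interaction.

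You do flag the parametric globalization over $\Delta^n_e$ as ``where the sheaf property and the closure under breaking concordances do real work,'' which is honest, but that sentence replaces the argument rather than giving it. The combination of (a) the mismatch between the point-supported model and the level-set surgery the strategy actually requires, and (b) the unresolved globalization, means the sketch identifies the correct mechanism but does not constitute a proof.
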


Under the assumptions of Theorem~\ref{th:2}, if $\mathcal{M}$ is path connected, then it is homotopy equivalent to its group completion $\Omega\mathcal{M}^1$. Furthermore, in view of Theorem~\ref{th:2}, we can identify $\mathcal{M}$ with $\Omega h\mathcal{M}^1$.

\section{Colored broken submersions}\label{s:5}

A map $f\co M\to N$ may not be a broken submersion even if its restriction to every subset $f^{-1}(U_i)$ for an open covering $\{U_i\}$ of $N$ is a broken submersion. In other words, broken submersions do not satisfy the \emph{sheaf property}. We will use colored broken submersions that satisfy the sheaf property.  

Let $\mathcal{I}$ denote the category of finite sets $\mathbf{n}=\{1,...,n\}$ for $n\ge 0$ and injective maps. It is a symmetric monoidal category with operation given by taking the disjoint union $\mathbf{m}\sqcup \mathbf{n}$ of objects in $\mathcal{I}$. An \emph{$\mathbf{m}$-coloring} on  a broken submersion $f$ is a map $C_f$ from the set of path components of breaking folds of $f$ to the set $\mathbf{m}$ such that the restriction of $f$ to breaking components of any fixed color is an embedding; here we allow $\mathbf{m}$ to be any element in $\mathcal{I}$ or the set $\infty$ of positive integers.  The moduli space of $\mathbf{m}$-colored broken submersions is denoted by $\M_\mathbf{m}$. Recall that an $\mathcal{I}$-space is a functor $\mathcal{I}\to \mathbf{Top}$. We are interested in the $\mathcal{I}$-space $\mathbf{m}\mapsto \M_{\mathbf{m}}$; its hocolim  is denoted by $\M_b$, see \cite{Sch}.

\begin{theorem}\label{th:5.1} The set of oriented bordism classes of broken submersions of dimension $2$ over closed oriented manifolds of dimension $n$  is naturally isomorphic to $\Omega_n( \M_b)$.
\end{theorem}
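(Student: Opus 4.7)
The plan is to combine the sheaf-theoretic machinery of \S\ref{s:5a} at each finite coloring level with standard properties of the homotopy colimit over $\mathcal{I}$. For each $\mathbf{m}\in\mathcal{I}$, I first verify that the collection of $\mathbf{m}$-colored broken submersions of dimension $2$ forms a monoidal class satisfying the sheaf property. The class property is routine: a transverse pullback of a broken submersion is a broken submersion whose breaking components project to those of the original, so colorings pull back, and the embedding condition on the union of same-color components is preserved. The sheaf property---which fails for uncolored broken submersions precisely because a breaking component might be only immersed (not embedded) globally---is restored because the global embedding condition is imposed only color-by-color on a fixed finite label set $\mathbf{m}$, and this condition assembles consistently from local data. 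The formalism of \S\ref{s:5a} then gives the natural identification
\[
\Omega_n(\M_\mathbf{m}) \;\cong\; \bigl\{\,\text{bordism classes of $\mathbf{m}$-colored broken submersions of dim.\ $2$ over closed $n$-manifolds}\,\bigr\}.
\]

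Next, since $\mathcal{I}$ is filtered and oriented bordism is a generalized homology theory, the functor $\Omega_n(-)$ commutes with the homotopy colimit defining $\M_b$, giving
\[
\Omega_n(\M_b) \;\cong\; \colim_{\mathbf{m}\in\mathcal{I}} \Omega_n(\M_\mathbf{m}).
\]
There is a natural forgetful map $\Phi$ from this colimit to the set of uncolored oriented bordism classes; it remains to prove $\Phi$ is a bijection. For surjectivity: given a broken submersion $f\co M\to N$ over a closed $n$-manifold $N$, the source $M$ is compact, so $f$ has only finitely many breaking components $\sigma_1,\dots,\sigma_k$, each embedded by $f|\sigma_i$ by definition; assigning pairwise distinct colors yields an injective $\mathbf{k}$-coloring whose class in the colimit maps to $[f]$. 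For injectivity: suppose $f_0,f_1$ with colorings $C_0,C_1$ are uncolored-bordant via $F\co W\to N\times[0,1]$. Compactness of $W$ again yields finitely many breaking components, and an injective coloring on $F$ gives a colored bordism whose boundary restrictions to $f_0,f_1$ differ from $C_0,C_1$ only by relabelings that become trivial after injection into a sufficiently large $\mathbf{m}'$; this zig-zag identifies $[(f_0,C_0)]$ with $[(f_1,C_1)]$ in the colimit.

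The main technical obstacle is the relabeling step. Colorings are arbitrary set maps from the finite set of breaking components into $\mathbf{m}$, whereas morphisms in $\mathcal{I}$ are injections, so two colorings of the same broken submersion need not be related by a single $\mathcal{I}$-morphism. When both colorings are injective they differ only by a permutation on a common target, which is an $\mathcal{I}$-isomorphism and hence trivial in the hocolim. For the non-injective case I would construct an auxiliary bordism by means of a stabilize-then-destabilize concordance of \S\ref{sec:5}, introducing enough extra breaking components that the original colors can be redistributed into an injective pattern; this reduces the general case to the injective one. Once this refinement is in place, naturality in $N$ of the resulting bijection is immediate from the construction via pullbacks along transverse maps.
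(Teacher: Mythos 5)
Your proposal follows the same broad strategy as the paper (reduce to finite colorings via compactness, then argue by relabeling), but it contains one genuine error. The step where you assert
\[
\Omega_n(\M_b) \;\cong\; \colim_{\mathbf{m}\in\mathcal{I}} \Omega_n(\M_\mathbf{m})
\]
on the grounds that ``$\mathcal{I}$ is filtered and oriented bordism is a generalized homology theory'' is not correct, because $\mathcal{I}$ is \emph{not} a filtered category. The coequalization axiom for filtered categories fails in $\mathcal{I}$: for instance, the two distinct injections $\mathbf{1}\to\mathbf{2}$ cannot be coequalized by any injection out of $\mathbf{2}$, since injections never identify points. As a result, one cannot simply commute $\Omega_n(-)$ with $\mathrm{hocolim}_{\mathcal{I}}$; in general there is a Bousfield--Kan spectral sequence $H_p(\mathcal{I};\Omega_q(\M_\bullet))\Rightarrow \Omega_{p+q}(\M_b)$ whose higher rows are not automatically zero.

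The fact that rescues the computation is a separate, non-obvious statement about $\mathcal{I}$-spaces: the canonical map from the sequential telescope $\M_\infty=\colim_{\mathbb{N}}\M_\mathbf{m}$ (colimit along the standard inclusions $\mathbf{m}\hookrightarrow\mathbf{m}+1$) to $\M_b=\mathrm{hocolim}_{\mathcal{I}}\M_\mathbf{m}$ is a weak equivalence. The paper states this in the remark immediately following Theorem~\ref{th:5.1}, with a pointer to Schlichtkrull's $\mathcal{I}$-space formalism \cite{Sch}. Since $\mathbb{N}$ \emph{is} filtered, $\Omega_n$ does commute with the sequential colimit, and the rest of your outline (compactness of $M$ and $W$ puts everything into a finite palette; injective colorings; permutations of colors are $\mathcal{I}$-isomorphisms and hence become identities up to canonical homotopy in the hocolim) then reproduces the paper's argument. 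Replacing the false filteredness claim with an appeal to the telescope reduction would close the gap.

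A smaller imprecision: you claim that the collection of $\mathbf{m}$-colored broken submersions for a \emph{fixed} $\mathbf{m}$ is monoidal, but the disjoint union of two $\mathbf{m}$-colored broken submersions need not be one, since same-colored breaking components from the two summands can have intersecting images, violating the embedding condition. The monoidal structure in the paper lives on the whole $\mathcal{I}$-space, $\M_\mathbf{m}\times\M_\mathbf{n}\to\M_{\mathbf{m}\sqcup\mathbf{n}}$, not on any single $\M_\mathbf{m}$. This is only a misstatement, not a fatal one: the identification of $\Omega_n(\M_\mathbf{m})$ with colored bordism classes needs only the sheaf property.
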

\begin{proof} Given a broken submersion $f$ over an oriented closed manifold, a choice of a coloring on its folds determines a class $\tau(f)$ in $\Omega_*(\M_b)$. We may choose a coloring so that different breaking components are colored by different colors. Then, since every isomorphism $\mathbf{m}\to \mathbf{m}$ is a morphism in $\mathcal{I}$, the class $\tau(f)$ does not depend on the choice of the coloring. If $f$ is bordant to a broken submersion $g$, then we may assume that the images of the classifying maps of $f$ and $g$ are in $\M_{\mathbf{m}}$ for a sufficiently big palette $\mathbf{m}$ and therefore $\tau(f)=\tau(g)$. 
Conversely, every map $\tau\co N\to \M_b$ representing a bordism class in $\Omega_*(\M_b)$ is linearly homotopic to a map with image in $\M_{\mathbf{m}}$ for some sufficiently big palette $\mathbf{m}$, and therefore every map $\tau$ determines a colored broken submersion. 
\end{proof}

The same argument shows that the canonical map of the telescope $\mathcal{M}_\infty=\colim\M_{\mathbf{m}}$ to $\M_b$ and the canonical map $\mathcal{M}_b\to \M_\infty$ are homotopy equivalences. In particular, homotopy classes $[N, \M_b]$ are in bijective correspondence with concordance classes of $\infty$-colored broken maps to $N$. Similarly, the homotopy colimit of the $\mathcal{I}$-space $\mathbf{m}\mapsto \M_{\mathbf{m}}^1$ is denoted by $\M_b^1$ and $\colim \M_{\mathbf{m}}^1\simeq \M_b^1$. 

A general argument on $\mathcal{I}$-spaces shows that $\M_b$ is an infinite loop space, see \cite{Sch}. Alternatively, the Galatius-Madsen-Tillmann-Weiss argument in \cite{Sa} shows that $\M_b$ is an infinite loop space, and its classifying space is $\M_b^1$. The H-space operation on $\M_b$ is defined by 
\[
    \M_\mathbf{m}\times \M_\mathbf{n} \longrightarrow \M_{\mathbf{m}\sqcup \mathbf{n}},
\]
\[
     \Delta_f\times \Delta_g \mapsto \Delta_{f\sqcup g},
\]
where $\Delta_h$ is the simplex in the moduli space corresponding to a map $h$.  We choose the unit point to be 
the vertex in $\M_{\emptyset}\subset \M_b$ corresponding to the map $\emptyset\to \Delta^0_e$.  

Since $\M_b$ is path connected, we have $\M_b\simeq \Omega\M_b^1$. Furthermore, by Theorem~\ref{th:2} the weak b-principle for colored broken submersions holds true. Consequently, $\M_b\simeq \Omega h\M_b^1$.

\section{Proof of the Mumford conjecture}\label{s:7c}

\begin{proof}[Proof of Theorem~\ref{th:main1}]
Let $h\M\simeq \Omega^{\infty}\MTSO(2)$ be the moduli space for oriented stable formal submersions of dimension $2$. We need to show that the map $\BDiff F_g\to h\M$ induces an isomorphism of homology groups in dimensions $\ll g$. 
Recall that $h\M^1$ is the geometric realization of the simplicial set whose simplicies are given by pairs of proper maps $(f, \alpha)$ to $\Delta^n_e\times \R$ such that $f$ is a submersion of dimension $2$, see \cite{Sa}. 
The simplicies of a bigger simplicial complex $h\M_b^1$ correspond to proper maps $(f, \alpha)$ to $\Delta^n_e\times \R$ such that $f$ is a broken submersion of dimension $2$ whose components of folds are labeled. 
Hence, there is an inclusion $h\M^1\to h\M^1_b$, which defines a map of the loop space $h\M\simeq \Omega h\M^1$ to the loop space $\Omega h\M_b^1\simeq \M_b$. Hence, we get a sequence of maps 
\[
      \eta\co \BDiff F_g\longrightarrow h\M\longrightarrow \M_b.
\]
Since $\M_b$ is an H-space, its fundamental group is abelian and therefore equals $[S^1, \M_b]$. On the other hand, every broken submersion over $S^1$ is concordant to a fiber bundle with fiber $F_g$, see Remark~\ref{r:3.5}. Hence, the fundamental group of $\M_b$ is the image of the perfect group $\pi_1(\BDiff F_g)$ provided that $g\ge 3$. Consequently, the space $\M_b$ is simply connected. In particular, every bordism class of $\M_b$ is represented by a map of a simply connected manifold $N$. By Proposition~\ref{p:6.3}, every broken submersion over a closed simply connected manifold $N$ is bordant to a fiber bundle with fiber $F_g$. Thus, $\eta$ induces an epimorphism in integral homology groups in dimensions $n\ll g$.  

Let us show that $\eta_*$ is injective in dimensions $n\ll g$, i.e., given a broken submersion $f_0$ over $N_0$ which restricts over $\partial N_0$ to a fiber bundle with fiber $F_g$ of genus $g\gg \dim N_0$, there is a fiber bundle $f_1$ over $N_1$ that restricts over $\partial N_1=\partial N_0$ to $f_0|\partial N_0$. Again, we may assume that $N$ is simply connected. Thus, the statement follows from Proposition~\ref{p:6.6}.  
This implies that $\eta_*$ is an isomorphism in integral homology groups in dimensions $\ll g$. Consequently,  the b-principle map $\BDiff F_g\to h\M$ induces an injective homomorphism in homology groups in a stable range. On the other hand, by the Miller-Morita theorem, the induced homomorphism in rational homology groups is also surjective in a stable range \cite{Mo}. This implies the Mumford conjecture. 
\end{proof}

 \begin{proof}[Proof of Theorem~\ref{th:4.2}]  We may turn the map $\eta\co \BDiff F_g\to \M_b$ defined in the proof of Theorem~\ref{th:main1} into a cofibration. Then the pair $(\M_b, \BDiff F_g)$ classifies bordism classes 
 \[
 (f, \partial f)\co (M, \partial M)\longrightarrow (N, \partial N)
 \]
 such that $f$ is a smooth broken submersion over $N$ that restricts over the boundary $\partial N$ to a fiber bundle $\partial f$ with fiber $F_g$, $\dim N\ll g$. It remains to observe that $\Omega_*(\M_b, \BDiff F_g)=0$ for $*\ll g$ since $\eta_*$ is an isomorphism in a stable range. 
 \end{proof}


\begin{thebibliography}{99}
{\scriptsize
\bibitem{An} Y.~Ando, Cobordisms of maps with singularities of a given class, Alg. Geom. Topol., 8 (2008), 1989--2029. 
\bibitem{Au3} M. Audin, Cobordismes d'immersions lagrangiennes et legendriennes, th\`ese d'\'etat, Orsay, 1986, Travaux en cours, Hermann, Paris, 1987.
\bibitem{BP} M.~Barratt, P.~Priddy, On the homology of non-connected monoids and their associated groups, Commentarii Math. Helvetici 47 (1972), 1--14.
\bibitem{EGM} Y.~Eliashberg, S.~Galatius, N.~Mishachev, Madsen-Weiss for geometrically minded topologists, Geometry and Topology 15 (2011) 411--472.
\bibitem{El} Y.~Eliashberg, Cobordisme des solutions de
relations diff\'erentielles, South Rhone seminar on geometry. I,
Lyon, 1983, Trav. Cours, 1984, 17--31.
\bibitem{EM} Y.~Eliashberg, N.~Mishachev, Introduction to the
h-principle, Grad. Stud. Math, v48, AMS, Providence, Rhode Island,
2002.
\bibitem{GMTW} S.~Galatius, I.~Madsen, U.~Tillmann, M.~Weiss, The homotopy type of the cobordism category,  Acta Math.  202  (2009), 195--239.
\bibitem{GRW} S.~Galatius, O.~Randal-Williams, Monoids of moduli spaces of manifolds, Geometry and Topology 14 (2010) 1243--1302. 
\bibitem{Gr} M.~Gromov, Partial differential relations,
Springer-Verlag, Berlin, Heidelberg, 1986.
\bibitem{Ha} A. Hatcher, A short exposition of the Madsen-Weiss theorem, preprint. 
\bibitem{MW} I.~Madsen, M.~S.~Weiss, The stable moduli space of Riemann surfaces: Mumford's conjecture,  Ann. of Math. (2)  165  (2007),  843--941.
\bibitem{Mo} Sh.~Morita, Geometry of characteristic classes, Translations of Math. Monographs 199, AMS, 2001. 
\bibitem{RS} R.~Rim\'{a}nyi, A.~Sz\H{u}cs, Pontrjagin-Thom type
construction for maps with singularities, Topology, 37 (1998),
1177--1191.
\bibitem{Sa} R.~Sadykov, The bordism version of the h-principle, preprint. 
\bibitem{Sa1} R.~Sadykov, The weak b-principle, Contemporary Math. 621 (2014), 101--112.
\bibitem{Sa3} R.~Sadykov,  Bordism groups of special generic mappings,  Proc. Amer. Math. Soc.,  133  (2005),  931--936. 
\bibitem{Sa4} R.~Sadykov, Bordism groups of solutions to differential relations, Alg. Geom. Topol., 9 (2009), 2311--2349.  
\bibitem{Sch} Ch. Schlichtkrull, Units of the ring spectra and thire traces in algebraic K-theory, Geom. Topol. 8 (2004), 645--673. 
\bibitem{Sz} A.~Sz\H ucs, Cobordism of singular maps, Geom. and Topol., 12 (2008), 2379--2452. 
\bibitem{We} R.~Wells, Cobordism groups of immersions, Topology, 5
(1966), 281--294.
}
\end{thebibliography}
\end{document}